\newtheorem{theorem}{Theorem}
\newtheorem{proposition}{Proposition} 
\newtheorem{corollary}{Corollary} 
\newtheorem{remark}{Remark} 
\newtheorem{lemma}{Lemma} 
\newtheorem{definition}{Definition}
\begin{document}
	
\title[Polycycles in non-smooth planar vector fields]{Stability and cyclicity of polycycles in non-smooth planar vector fields}

\author[Paulo Santana]
{Paulo Santana$^1$}

\address{$^1$ IBILCE--UNESP, CEP 15054--000, S. J. Rio Preto, S\~ao Paulo, Brazil}
\email{paulo.santana@unesp.br}

\subjclass[2020]{34A36, 34C23, 34C37, 37C29, 37G15}

\keywords{Polycycles, non-smooth vector fields, piecewise vector fields, Filippov systems.}

\begin{abstract}
	In this paper we extend three results about polycycles (also known as graphs) of planar smooth vector field to planar non-smooth vector fields (also known as piecewise vector fields, or Filippov systems). The polycycles considered here may contain hyperbolic saddles, semi-hyperbolic saddles, saddle-nodes and tangential singularities of any degree. We determine when the polycycle is stable or unstable. We prove the bifurcation of at most one limit cycle in some conditions and at least one limit cycle for each singularity in other conditions.
\end{abstract}

\maketitle

\section{Introduction and description of the results}\label{Intro}

The field of Dynamic Systems has developed and now have many branches, being one of them the field of \emph{non-smooth vector fields} (also known as piecewise vector fields, or Filippov systems), a common frontier between mathematics, physics and engineering. See \cites{Andronov,Filippov} for the pioneering works in this area. For applications, see \cites{Jef1,Jef2,Jef3} and the references therein. In this paper we are interested in the \emph{qualitative theory} of non-smooth vector fields. More precisely, in the qualitative theory of polycycles in non-smooth vector fields. A polycycle is a simple closed curve composed by a collection of singularities and regular orbits, inducing a first return map. There are many works in the literature about polycycles in smooth vector fields, take for example some works about its stability \cites{Cherkas,Dulac1923,Soto,GasManMan}, the number of limit cycles which bifurcates from it \cites{Mourtada,DumRouRou,HanWuBi,YeCaiLo,DumMorRou}, the displacement maps \cites{HolmesHeteroclinic,GuckHolmes,PerkoHeteroclinic,Duff} and some bifurcation diagrams \cites{DumRouRou,Mourtada2}. There are also some literature about polycycles in non-smooth vector fields, dealing for example with bifurcation diagrams \cites{AndGomNov,NovTeiZel,NovRon} and the Dulac problem \cite{AndJefMarTei}. 

The goal of this paper is to extend to non-smooth vector fields three results about polycycles in smooth vector fields. To do this, we lay, as in the smooth case, mainly in the idea of obtaining global properties of the polycycle from local properties of its singularities. For a brief description of the obtained results, let $Z$ be a non-smooth vector field with a polycycle $\Gamma^n$ with $n$ singularities $p_i$, each of them being either a hyperbolic saddle or a tangential singularity. For each $p_i$, we associated a positive real number $r_i$ such that if $r_i>1$ (resp. $r_i<1$), then $p_i$ locally contract (resp. repels) the flow. Our first main result deals with the stability of $\Gamma^n$, stating that if $r(\Gamma^n)=\prod_{i=1}^{n}r_i$ is such that $r(\Gamma^n)>1$ (resp. $r(\Gamma^n)<1)$, then the polycycle contract (resp. repels) the flow. Our second and third main results deals with the number of limit cycles that can bifurcate from $\Gamma^n$. More precisely, in our second main result we state sufficient conditions so that the cyclicity of $\Gamma^n$ is one and in our third main result we state sufficient conditions so that the cyclicity of $\Gamma^n$ is at least $n$.

The paper is organized as follows. In Section~\ref{MR} we establish the main theorems. In Section~\ref{sec2} we have some preliminaries about the transitions maps near a hyperbolic saddle, a semi-hyperbolic singularity with a hyperbolic sector, and a tangential singularity. Theorems~\ref{Main1} and \ref{Main3} are proved in Section~\ref{sec3}. In Sections~\ref{sec4} and \ref{sec5} we study some tools to approach Theorem~\ref{Main4}, which is proved in Section~\ref{sec6}. 

\section{Main Results}\label{MR}

Let $h_i\colon\mathbb{R}^2\to\mathbb{R}$, $i\in\{1,\dots,N\}$, $N\geqslant1$, be $C^\infty$-real functions. For these functions, define $\Sigma_i=h^{-1}(\{0\})$. Suppose also that $0$ is a regular value of $h_i$, i.e. $\nabla h_i(x)\neq0$ for every $x\in\Sigma_i$, $i\in\{1,\dots,N\}$. Define $\Sigma=\cup_{i=1}^{N}\Sigma_i$ and let $A_1,\dots, A_M$, $M\geqslant 2$, be the connected components of $\mathbb{R}^2\backslash\Sigma$. For each $j\in\{1,\dots,M\}$, let $\overline{A}_j$ be the topological closure of $A_j$ and let $X_j$ be a $C^\infty$-planar vector field defined over $\overline{A}_j$.

\begin{definition}
	Given $\Sigma$, $A_1,\dots, A_M$ and $X_1,\dots X_M$ as above, the associated planar non-smooth vector field $Z=(X_1,\dots, X_M;\Sigma)$, with discontinuity $\Sigma$, is the non-smooth planar vector field given by $Z(x,\mu)=X_j(x,\mu)$, if $x\in A_j$, for some $j\in\{1,\dots,M\}$. In this case, we say that the vector fields $X_1,\dots,X_M$ are the components of $Z$ and $\Sigma_1,\dots,\Sigma_N$ are the components of $\Sigma$.
\end{definition}

From now on, let us denote by $p$ points on $\Sigma$ such that there exists an unique $i\in\{1,\dots,N\}$ such that $p\in\Sigma_i$. Let also $X$ be one of the two components of $Z$ defined at $p$. The \emph{Lie derivative} of $h_i$ in the direction of the vector field $X$ at $p$ is defined as,
	\[Xh_i(p)=\left<X(p),\nabla h_i(p)\right>,\]
where $\left<\;,\;\right>$ denotes the standard inner product of $\mathbb{R}^2$. Under these conditions, we say that $p$ is a \emph{tangential singularity} if
		\[X_ah_i(p)X_bh_i(p)=0, \quad X_a(p)\neq0, \quad X_b(p)\neq0,\] 
where $X_a$ and $X_b$ are the two components of $Z$ defined at $p$. Let $x\in\Sigma$. We say that $x$ is a \emph{crossing point} if there exist an unique $i\in\{1,\dots,N\}$ such that $x\in\Sigma_i$ and $X_ah_i(x)X_bh_i(x)>0$, where $X_a$ and $X_b$ are the two components of $Z$ defined at $x$.

\begin{definition}
	A graphic of $Z$ is a subset formed by singularities $p_1,\dots p_n,p_{n+1}=p_1$, (not necessarily distinct) and regular orbits $L_1,\dots, L_n$ such that $L_i$ is a stable characteristic orbit of $p_i$ and an unstable characteristic orbit of $p_{i+1}$ (i.e. $\omega(L_i)=p_i$ and $\alpha(L_i)=p_{i+1}$), oriented in the sense of the flow. A polycycle is a graphic with a return map. A polycycle $\Gamma^n$ is semi-elementary if it satisfies the following conditions.
	\begin{enumerate}[label=(\alph*)]
		\item Each regular orbit $L_i$ intersects $\Sigma$ at most in a finite number of points $\{x_{i,0},x_{i,1},\dots,x_{i,n(i)}\}$, with each $x_{i,j}$ being a crossing point;
		\item $\Gamma^n$ is homeomorphic to $\mathbb{S}^1$;
		\item Each singularity $p_i$ satisfies exactly one of the following conditions:
		\begin{enumerate}[label=(\roman*)]
			\item $p_i$ is semi-hyperbolic and $p_i\not\in\Sigma$;
			\item $p_i$ is a hyperbolic saddle and $p_i\not\in\Sigma$;
			\item $p_i$ is a tangential singularity.
		\end{enumerate}
	\end{enumerate}
	A polycycle is elementary if it satisfies conditions $(a)$, $(b)$ and if its singularities satisfies either $(ii)$ or $(iii)$.
\end{definition}

From now on, let $\Gamma^n$ denote an elementary or semi-elementary polycycle with $n$ distinct singularities $p_1,\dots,p_n$. See Figure~\ref{Fig1}. 
\begin{figure}[ht]
	\begin{center}
		\begin{overpic}[width=8cm]{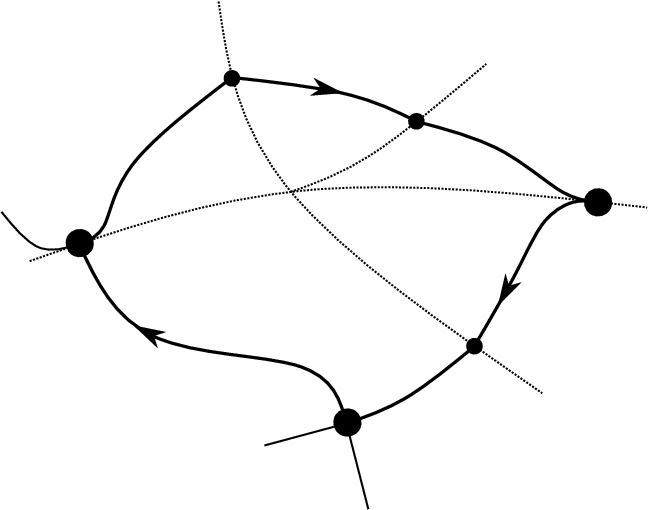} 
			\put(91,51){$p_1$}
			\put(10,45){$p_2$}
			\put(56,11){$p_3$}
			\put(55,65){$L_1$}
			\put(15,25){$L_2$}
			\put(66,16){$L_3$}
			\put(67.5,59.75){$x_{1,0}$}
			\put(26,68){$x_{1,1}$}
			\put(76,24){$x_{3,0}$}
			\put(42,44){$\Sigma$}
		\end{overpic}
	\end{center}
	\caption{An example of $\Gamma^3$. Observe that $\Gamma^3$ does not pass through the intersection of the components of $\Sigma$.}\label{Fig1}
\end{figure}
Observe that $\Gamma^n$ divide the plane in two connected sets, with only one being bounded. Let $A$ denote the connected set in which the first return map is contained. Observe that $A$ can be either the bounded or unbounded set delimited by $\Gamma^n$. See Figure~\ref{Fig8}.
\begin{figure}[ht]
	\begin{center}
		\begin{minipage}{5.5cm}
			\begin{center}
				\begin{overpic}[height=4cm]{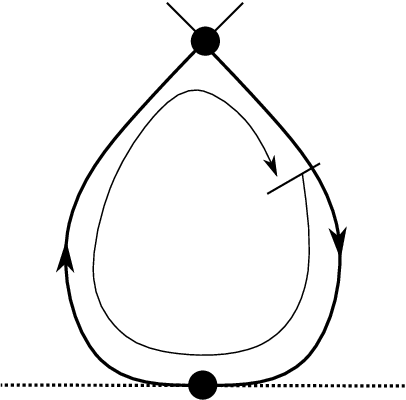} 
				\end{overpic}
				
				$(a)$
			\end{center}
		\end{minipage}
		\begin{minipage}{5.5cm}
			\begin{center}
				\begin{overpic}[height=4cm]{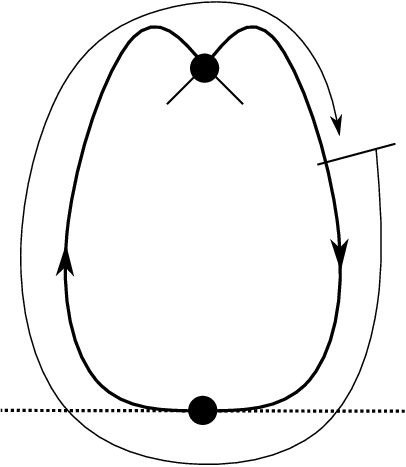} 
				\end{overpic}
				
				$(b)$
			\end{center}
		\end{minipage}
	\end{center}
	\caption{Examples of polycycles $\Gamma^2$ such that $(a)$ $A$ is the bounded set and $(b)$ $A$ is the unbounded set.}\label{Fig8}
\end{figure}

\begin{definition}
	Let $p\in\Sigma_i$ be a tangential singularity, $X$ one of the components of $Z$ defined at $p$ and let $X^kh_i(p)=\left<X(p),\nabla X^{k-1}h_i(p)\right>$, $k\geqslant2$. We say that $X$ has $m$-order contact with $\Sigma$ at $p$, $m\geqslant1$, if $m$ is the first positive integer such that  $X^mh_i(p)\neq0$. 
\end{definition}

Let $p\in\Sigma_i$ be a tangential singularity of $\Gamma^n$, $L_s$ and $L_u$ the regular orbits of $\Gamma^n$ such that $\omega(L_s)=p$ and $\alpha(L_u)=p$. Let $X_a$ and $X_b$ be the two components of $Z$ defined at $p$ and let $A_a$, $A_b$ be the respective connected components of $\mathbb{R}^2\backslash\Sigma$ such that $X_a$ and $X_b$ are defined over $\overline{A_a}$ and $\overline{A_b}$. Given two parametrizations $\gamma_s(t)$ and $\gamma_u(t)$ of $L_s$ and $L_u$ such that $\gamma_s(0)=\gamma_u(0)=p$, let $A_s$, $A_u\in\{A_a,A_b\}$ be such that $A_s\cap\gamma_s([-\varepsilon,0])\neq\emptyset$ and $A_u\cap\gamma_u([0,\varepsilon])\neq\emptyset$, for any $\varepsilon>0$ small. Let also $X_s$, $X_u\in\{X_a,X_b\}$ denote the components of $Z$ defined at $A_s$ and $A_u$. Observe that we may have $A_s=A_u$ and thus $X_s=X_u$. See Figure~\ref{Fig18}. 

\begin{figure}[ht]
	\begin{center}
		\begin{minipage}{6cm}
			\begin{center}
				\begin{overpic}[height=4cm]{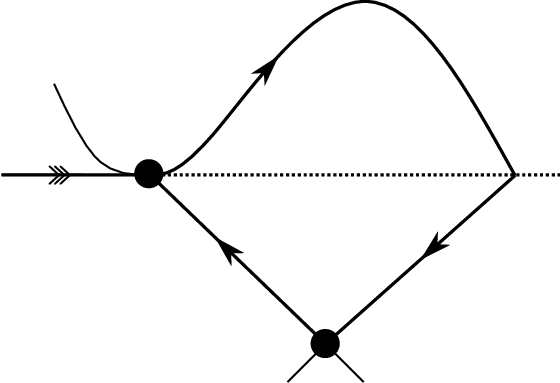} 
					\put(25,42){$p$}
					\put(83,54){$L_u$}
					\put(33,18){$L_s$}
				\end{overpic}
				
				$(a)$
			\end{center}
		\end{minipage}
		\begin{minipage}{6cm}
			\begin{center}
				\begin{overpic}[height=4cm]{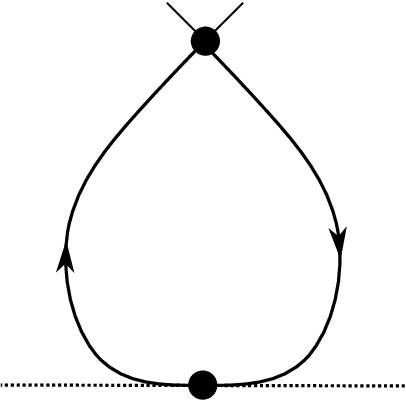} 
					\put(48,10){$p$}
					\put(6,20){$L_u$}
					\put(75,62){$L_s$}
				\end{overpic}
				
				$(b)$
			\end{center}
		\end{minipage}
	\end{center}
	\caption{Examples of a tangential singularity $p$ such that $(a)$ $A_s\neq A_u$ and $(b)$ $A_s=A_u$.}\label{Fig18}
\end{figure}	

\begin{definition}
	Given a tangential singularity $p$, let $X_s$ and $X_u$ be as above. We define the stable and unstable contact order of $p$ as the contact order $n_s$ and $n_u$ of $X_s$ and $X_u$ with $\Sigma$ at $p$, respectively. Furthermore we also say that $X_s$ and $X_u$ are the stable and unstable components of $Z$ defined at $p$.
\end{definition}

\begin{definition}
	Let $\Gamma^n$ be an elementary polycycle with distinct singularities $p_1,\dots,p_n$. The hyperboli\-city ratio of $r_i>0$ of $p_i$ is defined as follows.
	\begin{enumerate}[label=(\alph*)]
		\item If $p_i$ is a tangential singularity, then $r_i=\frac{n_{i,u}}{n_{i,s}}$, where $n_{i,s}$ and $n_{i,u}$ are the stable and unstable contact orders of $p_i$.
		\item If $p_i$ is a hyperbolic saddle, then $r_i=\frac{|\nu_i|}{\lambda_i}$, where $\nu_i<0<\lambda_i$ are the eigenvalues of $p_i$.
	\end{enumerate}
\end{definition}

In the case of a smooth vector fields, Cherkas \cite{Cherkas} proved that if $\Gamma$ is a polycycle composed by $n$ hyperbolic saddles $p_1,\dots,p_n$, with hyperbolicity ratios $r_1,\dots,r_n$, then $\Gamma$ is stable if,
	\[r:=\prod_{i=1}^{n}r_i>1,\]
and unstable if $r<1$. Therefore, our first main theorem is an extension, to non-smooth vector fields, of such classic result.

\begin{theorem}\label{Main1}
	Let $Z=(X_1,\dots,X_M;\Sigma)$ be a planar non-smooth vector field with an elementary polycycle $\Gamma^n$. Let also,
	\begin{equation}\label{1}
		r(\Gamma^n)=\prod_{i=1}^{n}r_i.
	\end{equation}
	If $r(\Gamma^n)>1$ (resp. $r(\Gamma^n)<1$), then there is a neighborhood $N_0$ of $\Gamma^n$ such that the orbit of $Z$ through any point $p\in N_0\cap A$ has $\Gamma^n$ as $\omega$-limit (resp. $\alpha$-limit). 
\end{theorem}

\begin{definition}\label{Def4}
	Let $\Gamma^n$ be a semi-elementary polycycle with $n$ distinct singularities $p_1,\dots,p_n$. We say that $p_i$ is stable (unstable) singularity of $\Gamma^n$ if it satisfies one of the following conditions.
	\begin{enumerate}[label=(\alph*)]
		\item $p_i$ is a semi-hyperbolic singularity and $\lambda_i<0$ (resp. $\lambda_i>0$), where $\lambda_i$ is the unique non-zero eigenvalue of $p_i$;
		\item $p_i$ is a hyperbolic saddle and $r_i>1$ (resp. $r_i<1$), where $r_i$ is the hyperbolicity ratio of $p_1$;
		\item $p_i$ is a tangential singularity and $n_{i,s}=1$ (resp. $n_{i,u}=1)$, where $n_{i,s}$ and $n_{i,u}$ are the stable and unstable contact orders of $p_i$.
	\end{enumerate}
\end{definition}

Let $\Gamma^n$ be a semi-elementary polycycle. We say that the \emph{cyclicity} of $\Gamma^n$ is $k$ if at most $k$ limit cycles can bifurcate from a arbitrarily small perturbation of $\Gamma^n$. In the case of smooth vector fields, Dumortier et al \cite{DumRouRou} proved that if $\Gamma$ is a polycycle of a smooth vector field composed only by stable (resp. unstable) singular points, then $\Gamma$ has cyclicity one. Furthermore if any small perturbation of $\Gamma$ has a limit cycle, then it is hyperbolic and stable (resp. unstable). Therefore, our second main theorem is an extension of such result to the realm of non-smooth vector fields.

\begin{theorem}\label{Main3}
	Let $Z=(X_1,\dots,X_M;\Sigma)$ be a planar non-smooth vector field with a semi-elementary polycycle $\Gamma^n$. Suppose that each singularity $p_i$ is a stable (resp. unstable) singularity of $\Gamma^n$. If a small perturbation of $\Gamma^n$ has a limit cycle, then it is unique, hyperbolic and stable (resp. unstable). In particular, the cyclicity of $\Gamma^n$ is one.
\end{theorem}

Let $\Gamma$ be a polycycle of a smooth vector field composed by $n$ hyperbolic saddles $p_1,\dots,p_n$, with hyperbolicity ratios $r_1,\dots,r_n$. Let also, $R_i=\prod_{j=1}^{i}r_j$. Han et al \cite{HanWuBi} proved that if $(R_i-1)(R_{i+1}-1)<0$, $i\in\{1,\dots,n-1\}$, then there exists an arbitrarily small $C^\infty$-perturbation of $\Gamma$ with at least $n$ limit cycles. In our third main result, we extend this result to the case of non-smooth vector fields.

\begin{theorem}\label{Main4}
	Let $Z=(X_1,\dots,X_M;\Sigma)$ be a planar non-smooth planar vector field with an elementary polycycle $\Gamma^n$ and let $R_i=\prod_{j=1}^{i}r_j$, $i\in\{1,\dots,n\}$. Suppose $R_n\neq1$ and, if $n\geqslant2$, suppose $(R_i-1)(R_{i+1}-1)<0$ for $i\in\{1,\dots,n-1\}$. Then, there exist an arbitrarily small perturbation of $Z$ such that at least $n$ limit cycles bifurcates from $\Gamma^n$. In particular, the cyclicity of $\Gamma^n$ is at least $n$.
\end{theorem}

\section{Preliminaries}\label{sec2}

\subsection{Transition map near a hyperbolic saddle}\label{sec2.1}

Let $X_\mu$ be a $C^\infty$ planar vector field depending in a $C^\infty$-way on a parameter $\mu\in\mathbb{R}^r$, $r\geqslant1$, defined in a neighborhood of a hyperbolic saddle $p_0$ at $\mu=\mu_0$. Let $\Lambda\subset\mathbb{R}^r$ be a small enough neighborhood of $\mu_0$, $\nu(\mu)<0<\lambda(\mu)$ be the eigenvalues of the hyperbolic saddle $p(\mu)$, $\mu\in\Lambda$, and $r(\mu)=\frac{|\nu(\mu)|}{\lambda(\mu)}$ be the hyperbolicity ratio of $p(\mu)$. Let $B$ be a small enough neighborhood of $p_0$ and $\Phi:B\times\Lambda\to\mathbb{R}^2$ be a $C^\infty$-change of coordinates such that $\Phi$ sends the hyperbolic saddle $p(\mu)$ to the origin and its unstable and stable manifolds $W^u(\mu)$ and $W^s(\mu)$ to the axis $Ox$ and $Oy$, respectively. Let $\sigma$ and $\tau$ be two small enough cross sections of $Oy^+$ and $Ox^+$, respectively. We can suppose that $\sigma$ and $\tau$ are parametrized by $x\in[0,x_0]$ and $y\in[0,y_0]$, with $x=0$ and $y=0$ corresponding to $Oy^+\cap\sigma$ and $Ox^+\cap\tau$, respectively. The flow of $X_\mu$ in the first quadrant in this new coordinate system defines a transition map:
	\[D\colon(0,x_0]\times\Lambda\to(0,y_0],\]
called the \emph{Dulac's map} \cite{Dulac1923}. See Figure~\ref{Fig16}. Observe that $D$ is of class $C^\infty$ for $x\neq0$ and it can be continuously extend by $D(0,\mu)=0$ for all $\mu\in\Lambda$.
\begin{figure}[ht]
	\begin{center}
		\begin{overpic}[height=5cm]{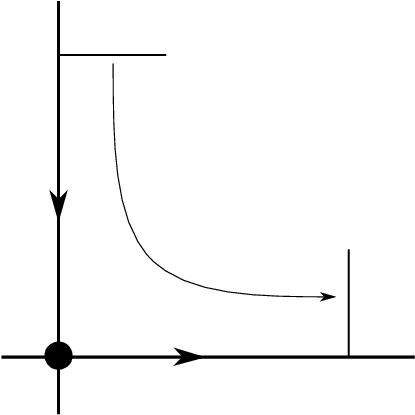} 
			\put(42,85){$\sigma$}
			\put(82,42){$\tau$}
			\put(5,5){$0$}
			\put(0,98){$Oy$}
			\put(95,5){$Ox$}
			\put(38,42){$D$}
		\end{overpic}
	\end{center}
	\caption{The Dulac map near a hyperbolic saddle.}\label{Fig16}
\end{figure}

\begin{definition}
	Let $I_k$, $k\geqslant0$, denote the set of functions $f:[0,x_0]\times\Lambda_k\to\mathbb{R}$, with $\Lambda_k\subset\Lambda$, satisfying the following properties.
	\begin{enumerate}[label=(\alph*)]
		\item $f$ is $C^\infty$ on $(0,x_0]\times \Lambda_k$;
		\item For each $j\in\{0,\dots,k\}$ we have that $\varphi_j=x^j\frac{\partial^j f}{\partial x^j}(x,\mu)$ is continuous on $(0,x_0]\times\Lambda_k$ with $\varphi_j(x,\mu)\to0$ for $x\to0$, uniformly in $\mu$.
	\end{enumerate}
	A function $f:[0,x_0]\times\Lambda\to\mathbb{R}$ is said to be of class $I$ if $f$ is $C^\infty$ on $(0,x_0]\times\Lambda$ and for every $k\geqslant0$ there exists a neighborhood $\Lambda_k\subset\Lambda$ of $\mu_0$ such that $f$ is of class $I^k$ on $(0,x_0]\times\Lambda_k$.
\end{definition}

\begin{theorem}[Mourtada, \cites{Mourtada,DumRouRou}]
	Let $X_\mu$, $\sigma$, $\tau$, and $D$ be as above. Then, for $(x,\mu)\in(0,x_0]\times\Lambda$, we have
\begin{equation}\label{MourtadaForm}
	D(x,\mu)=x^{r(\mu)}(A(\mu)+\varphi(x,\mu)),
\end{equation}
	with $\varphi\in I$ and $A$ a positive $C^\infty$-function.
\end{theorem}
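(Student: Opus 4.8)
The plan is to reduce $X_\mu$ near $p_0$ to a simple form, express $D$ as the exponential of a line integral of the (divided) $y$-component of the field along orbits, factor out $x^{r(\mu)}$ explicitly, and then check that the remaining factor is positive and $C^\infty$ and that, after subtracting its limit at $x=0$, it lies in the class $I$. \emph{Step $1$ (normalization).} As $W^u(\mu),W^s(\mu)$ are $C^\infty$ and depend smoothly on $\mu$, a $\mu$-dependent $C^\infty$ change of coordinates puts them on $Ox$ and $Oy$; invariance of the axes forces $\dot x=xP(x,y,\mu)$, $\dot y=yQ(x,y,\mu)$ with $P(0,0,\mu_0)=\lambda>0$, $Q(0,0,\mu_0)=\nu<0$, and after dividing the field by the positive unit $P$ (a time change, harmless for orbits and for $D$) we may assume $\dot x=x$, $\dot y=-y\,Y(x,y,\mu)$ with $Y$ of class $C^\infty$, $Y(0,0,\mu)=r(\mu)$ and $Y(0,0,\mu_0)=r$; shrinking $B$ and $\Lambda$, fix $0<r_-\leqslant Y\leqslant r_+$ there. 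We may also take $\sigma=\{y=b\}$ and $\tau=\{x=a\}$ with $a,b>0$ small, a general choice only altering $A$ and the $I$-part and not the form.

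\emph{Step $2$ (integral formula).} The orbit through $(x_1,b)\in\sigma$ is $x(t)=x_1e^{t}$; it meets $\tau$ at $t=\ln(a/x_1)$, and its $y$-coordinate as a function of $x=s$, say $\bar y(s;x_1,\mu)$, solves $s\,\partial_s\bar y=-\bar y\,Y(s,\bar y,\mu)$ with $\bar y(x_1)=b$, so $D(x_1,\mu)=\bar y(a;x_1,\mu)$. Integrating $\frac{d}{dt}\ln y=-Y$ along the orbit and substituting $s=x_1e^{t}$ gives
\[D(x_1,\mu)=b\exp\!\Bigl(-\!\int_{x_1}^{a}Y\bigl(s,\bar y(s;x_1,\mu),\mu\bigr)\,\frac{ds}{s}\Bigr);\]
subtracting the constant $r(\mu)$ under the integral and using $\int_{x_1}^{a}ds/s=\ln(a/x_1)$,
\[D(x_1,\mu)=x_1^{r(\mu)}\,b\,a^{-r(\mu)}\,e^{-I(x_1,\mu)},\qquad I(x_1,\mu):=\int_{x_1}^{a}\frac{Y(s,\bar y(s;x_1,\mu),\mu)-r(\mu)}{s}\,ds.\]

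\emph{Step $3$ (positivity and smoothness of $A$).} A Gronwall comparison gives $b(s/x_1)^{-r_+}\leqslant\bar y(s;x_1,\mu)\leqslant b(s/x_1)^{-r_-}$, so with $|Y(s,y,\mu)-r(\mu)|\leqslant L(|s|+|y|)$ the integrand of $I$ is dominated by $L\bigl(1+b\,x_1^{r_-}s^{-1-r_-}\bigr)$; hence $I$ is bounded uniformly in $\mu$, and the same estimates applied to $\partial_{x_1}\bar y$ (which obeys the variational equation, with coefficients polynomial in $Y$ and its derivatives) show that $\partial_{x_1}I(x_1,\mu)$ is integrable near $x_1=0$, uniformly in $\mu$. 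Therefore $I(x_1,\mu)\to I(0,\mu)$ as $x_1\to0^{+}$, uniformly in $\mu$, with $I(0,\cdot)$ of class $C^\infty$ (differentiating in the regular parameter $\mu$ under the integral sign and iterating). Putting $A(\mu):=b\,a^{-r(\mu)}e^{-I(0,\mu)}$ and $\phi(x,\mu):=b\,a^{-r(\mu)}\bigl(e^{-I(x,\mu)}-e^{-I(0,\mu)}\bigr)$ we get $D(x,\mu)=x^{r(\mu)}\bigl(A(\mu)+\phi(x,\mu)\bigr)$ with $A$ positive and $C^\infty$ and $\phi(\cdot,\mu)\to0$ as $x\to0^{+}$.

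\emph{Step $4$ ($\phi\in I$), and the main difficulty.} Since $D$, $x^{r(\mu)}$ and $A$ are $C^\infty$ on $\{x>0\}$, so is $\phi$, and the case $j=0$ of the definition of $I^{k}$ is Step $3$. For $1\leqslant j\leqslant k$ one differentiates the formula for $I$ — equivalently, studies $\partial_{x_1}^{j}\bar y(a;x_1,\mu)$ through the higher variational equations, whose coefficients are polynomial in $Y$ and its derivatives of order $\leqslant j$ — and obtains, on a suitable neighbourhood $\Lambda_k$ of $\mu_0$, that $x^{j}\partial_x^{j}I(x,\mu)\to0$ as $x\to0$ uniformly on $\Lambda_k$, whence the same for $x^{j}\partial_x^{j}\phi$ by the chain rule; this is exactly membership of $\phi$ in $I$. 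The real work is in Steps $3$--$4$: the a priori estimates on $\bar y$ and its $x$-derivatives must hold uniformly in $\mu$ near $\mu_0$ although the range of integration $[x_1,a]$ has length $\to\infty$ as $x_1\to0$, and it is precisely the contraction of the saddle, i.e. the decay $\bar y(s;x_1,\mu)\leqslant b(x_1/s)^{r_-}$, that tames the tail; this is also why $\Lambda_k$ must be allowed to shrink with $k$, since the $j$-th derivative of the Dulac map feels the resonances $r(\mu)=p/q$ with $q\leqslant j$. A clean way to organise these estimates is to bring $Y$, for each $k$, by a $C^{k}$ $\mu$-dependent change of coordinates to a form depending only on finitely many resonant monomials $x^{p}y^{q}$ up to a $C^{k}$-flat remainder, after which the bounds above become essentially explicit.
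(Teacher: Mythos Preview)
The paper does not prove this theorem at all: its proof is the single line ``See \cite{Mourtada,Dumortier}'', i.e.\ a citation to Mourtada's original paper and the exposition by Dumortier--Roussarie--Rousseau. So there is nothing in the paper to compare your argument against beyond the fact that the result is quoted from the literature.

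Your sketch follows the correct overall architecture --- normalize the saddle so the axes are invariant, write the Dulac map as $D(x,\mu)=x^{r(\mu)}\cdot b\,a^{-r(\mu)}e^{-I(x,\mu)}$, then control $I$ and its $x$-derivatives --- and this is indeed the skeleton of Mourtada's proof. However, Step~3 contains a genuine gap. You assert that ``$\partial_{x_1}I(x_1,\mu)$ is integrable near $x_1=0$'', but differentiating $I$ produces the boundary term
\[
-\frac{Y(x_1,b,\mu)-r(\mu)}{x_1},
\]
and since $Y(0,b,\mu)-r(\mu)=Y(0,b,\mu)-Y(0,0,\mu)$ is generically nonzero (nothing in your normalization forces $\partial_yY(0,0,\mu)=0$), this term is of order $1/x_1$, which is not integrable. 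The variational integral that accompanies it is likewise of order $1/x_1$ (because $\partial_{x_1}\bar y(x_1;x_1,\mu)\sim br(\mu)/x_1$), so any convergence of $I(x_1,\mu)$ must come from a cancellation you have not exhibited. The conclusion $I(x_1,\mu)\to I(0,\mu)$ is true, but it does not follow from the estimate you wrote. Step~4 then amplifies the same difficulty for higher derivatives.

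You essentially concede this in your last sentence: bringing $Y$, for each $k$, to a $C^k$ normal form with only finitely many resonant monomials plus a flat remainder is not a ``clean way to organise these estimates'' --- it is \emph{the} proof, and it is exactly what Mourtada does. Steps~1--2 of your write-up are fine; Steps~3--4 as written are a heuristic that points toward the actual argument rather than carrying it out.
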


Following Dumortier et al \cite{DumRouRou}, we call \emph{Mourtada's form} the expression \eqref{MourtadaForm} of the Dulac map and denote by $\mathfrak{D}$ the class of maps given by \eqref{MourtadaForm}.

\begin{proposition}[\cites{Mourtada,DumRouRou}]
	Given $D(x,\mu)=x^{r(\mu)}(A(\mu)+\varphi(x,\mu))\in\mathfrak{D}$, the following statements hold.
	\begin{enumerate}[label=(\alph*)]
		\item $D^{-1}$ is well defined and $D^{-1}(x,\mu)=x^{\frac{1}{r(\mu)}}(B(\mu)+\psi(x,\mu))\in\mathfrak{D}$;
		\item $\frac{\partial D}{\partial x}$ is well defined and 
		\begin{equation}\label{2}
			\frac{\partial D}{\partial x}(x,\mu)=r(\mu)x^{r(\mu)-1}(A(\mu)+\xi(x,\mu)),
		\end{equation}
		with $\xi\in I$.
	\end{enumerate}
\end{proposition}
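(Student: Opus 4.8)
The plan is to prove part (b) by a direct differentiation of Mourtada's form \eqref{MourtadaForm}, and then to obtain part (a) from (b) together with Mourtada's theorem applied to the time-reversed field $-X_\mu$. For (b), note that $D$ is $C^\infty$ on $]0,x_0]\times\Lambda$, so $\partial D/\partial x$ exists there; differentiating \eqref{MourtadaForm} and factoring out $r(\mu)x^{r(\mu)-1}$ gives
\[
\frac{\partial D}{\partial x}(x,\mu)=r(\mu)x^{r(\mu)-1}\Bigl(A(\mu)+\phi(x,\mu)+\tfrac{1}{r(\mu)}\,x\,\tfrac{\partial\phi}{\partial x}(x,\mu)\Bigr),
\]
so it suffices to show that $\xi:=\phi+\frac1r\,x\,\partial_x\phi$ is of class $I$. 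I would deduce this from three stability properties of the class $I$: it is closed under finite sums, under multiplication by functions that are $C^\infty$ in $\mu$ near $\mu_0$ (in particular by $1/r(\mu)$, which is legitimate since $r(\mu)>0$), and under the operator $f\mapsto x\,\partial_x f$. The first two are immediate from the definition of $I^k$; the third follows from the identity $x^j\partial_x^j(xf')=x^{j+1}\partial_x^{j+1}f+j\,x^j\partial_x^j f$, each summand of which tends to $0$ as $x\to0$ uniformly in $\mu$ for $j\le k$ whenever $f$ is of class $I^{k+1}$, so that $xf'$ is of class $I^k$; hence $xf'$ is of class $I$ whenever $f$ is. Applied to $\phi\in I$ this gives $\xi\in I$. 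For use in (a) I also record that, since $A>0$ and $\xi(x,\mu)\to0$ as $x\to0$ uniformly in $\mu$, there are $x_1\in\,]0,x_0]$ and a neighbourhood of $\mu_0$ on which $\partial D/\partial x>0$.

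For part (a), on $]0,x_1]$ the map $D(\cdot,\mu)$ has positive $x$-derivative by (b), hence is strictly increasing; being positive and satisfying $D(0^+,\mu)=0$, it is therefore a $C^\infty$ diffeomorphism of $]0,x_1]$ onto an interval $]0,y_1(\mu)]$, and, after shrinking, onto a common $]0,y_1]$. The inverse function theorem with parameters then makes $D^{-1}$ a $C^\infty$ map on $]0,y_1]\times\Lambda'$ for some neighbourhood $\Lambda'$ of $\mu_0$, continuously extended by $D^{-1}(0,\mu)=0$; in particular $D^{-1}$ is well defined.

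It remains to identify the form of $D^{-1}$. Rather than inverting \eqref{MourtadaForm} by hand I would apply Mourtada's theorem to $\hat X_\mu:=-X_\mu$. At $p(\mu)$ this is again a hyperbolic saddle, whose unstable and stable manifolds are the former $Oy$ and $Ox$ and whose eigenvalues are $|\nu(\mu)|>0$ and $-\lambda(\mu)<0$; interchanging the two coordinate axes puts $\hat X_\mu$ in the position required by Mourtada's theorem, with hyperbolicity ratio $\lambda(\mu)/|\nu(\mu)|=1/r(\mu)$, and its Dulac map from the section $\tau$ of its stable manifold to the section $\sigma$ of its unstable manifold is precisely $D^{-1}$, since the orbits of $\hat X_\mu$ are those of $X_\mu$ traversed backwards. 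Mourtada's theorem then yields $D^{-1}(x,\mu)=x^{1/r(\mu)}(B(\mu)+\psi(x,\mu))$ with $B$ a positive $C^\infty$ function and $\psi$ of class $I$; letting $x\to0$ in the identity $D^{-1}(x,\mu)=x^{1/r(\mu)}\bigl(A(\mu)+\phi(D^{-1}(x,\mu),\mu)\bigr)^{-1/r(\mu)}$ obtained from \eqref{MourtadaForm} moreover gives $B(\mu)=A(\mu)^{-1/r(\mu)}$. I expect the routine parts to be the stability properties of $I$ and the bookkeeping of sections and parametrisations; the point that needs care on this route is checking that all hypotheses of Mourtada's theorem are genuinely inherited by $\hat X_\mu$ after the relabelling, and that $D^{-1}$ really coincides with the Dulac map of $\hat X_\mu$ for compatibly chosen sections and parametrisations. (Alternatively, one may prove stability of $\mathfrak D$ under inversion directly; then the main obstacle becomes a composition lemma for the class $I$, established by a Fa\`a di Bruno expansion in which the factors $x^i\partial_x^i$ that fall on the inner map are absorbed, via its own Mourtada estimates, into quantities tending to $0$ uniformly on a shrinking family $\Lambda_k$ of parameter neighbourhoods.)
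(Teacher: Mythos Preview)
The paper does not give an in-house proof of this proposition: its entire proof is the line ``See \cites{Mourtada,Dumortier}.'' So there is nothing to compare against beyond noting that the result is quoted from the literature, whereas you supply a self-contained argument.

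Your argument is correct. For (b), the direct differentiation is the natural computation, and your check that the class $I$ is preserved by $f\mapsto x\,\partial_x f$ via the Leibniz identity $x^j\partial_x^j(xf')=x^{j+1}\partial_x^{j+1}f+j\,x^j\partial_x^jf$ is exactly what is needed; the shift $\Lambda_k\mapsto\Lambda_{k+1}$ in the parameter neighbourhoods is handled correctly by the definition of class $I$. For (a), the time-reversal idea is the cleanest route: the Dulac map of $-X_\mu$ between the swapped sections is literally $D^{-1}$, and since $-X_\mu$ is again a $C^\infty$ family of hyperbolic saddles with hyperbolicity ratio $1/r(\mu)$, Mourtada's theorem applies verbatim after relabelling the axes. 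The caveat you flag---that the sections, their parametrisations, and the $C^\infty$ straightening coordinates carry over to $-X_\mu$---is genuine but routine, since swapping the two coordinate axes is itself a smooth change of coordinates compatible with the setup of Section~\ref{sec2.1}. Your side computation $B(\mu)=A(\mu)^{-1/r(\mu)}$ is correct and a nice bonus, though not required by the statement.
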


For a complete characterization of the Dulac map, see \cites{MarVil2020,MarVil2021}. The following result is also a classical result about the Dulac map.

\begin{proposition}\label{P2}
	Let $X$ be a vector field of class $C^\infty$ with a hyperbolic saddle $p$ at the origin, with eigenvalues $\nu<0<\lambda$. Suppose also that the unstable and stable manifolds $W^u$ and $W^s$ of $p$ are given by the axis $Oy$ and $Ox$, respectively, and let $D=D(x)$ be the Dulac map associated with $p$. Then, given $\varepsilon>0$, there is $\delta>0$ such that,
		\[\delta^{1-\frac{|\nu|}{\lambda-\varepsilon}}x^{\frac{|\nu|}{\lambda-\varepsilon}}<D(x)<\delta^{1-\frac{|\nu|}{\lambda+\varepsilon}}x^{\frac{|\nu|}{\lambda+\varepsilon}}.\]	
\end{proposition}

The proof of Proposition~\ref{P2} is due to Sotomayor \cite[Section~$2.2$]{Soto}. A similar result was also proved by Cherkas \cite{Cherkas}. Since both the references are not in English (and as far as we know, there are no translation of it), we find it useful to prove Proposition~\ref{P2} in this paper.

\noindent\textit{Proof of Proposition~\ref{P2}.} Let $X=(P,Q)$ be given by,
	\[P(x,y)=\lambda x+r_1(x,y), \quad Q(x,y)=\nu y+ r_2(x,y).\]
Since $W^s$ and $W^u$ are given the coordinate axis, it follows that $r_1(0,y)=r_2(x,0)=0$, for every $(x,y)\in\mathbb{R}^2$. Observe that,
	\[r_1(x,y)=r_1(0,y)+x\int_{0}^{1}\frac{\partial r_1}{\partial x}(sx,y)\;ds.\]
Hence, it follows that we can write $r_1(x,y)=x\overline{r}_1(x,y)$, with $\overline{r}_1$ continuous and such that $\overline{r}_1(0,0)=0$. Similarly, we have $r_2(x,y)=y\overline{r}_2(x,y)$. Given $\varepsilon>0$, consider the linear vector field $X_\varepsilon=(P_\varepsilon,Q_\varepsilon)$ given by,
	\[P_\varepsilon(x,y)=(\lambda+\varepsilon)x, \quad Q_\varepsilon(x,y)=\nu y.\]
Let
	\[J(x,y)=\left(\begin{array}{cc} P(x,y) & P_\varepsilon(x,y) \vspace{0.2cm} \\ Q(x,y) & Q_\varepsilon(x,y) \end{array}\right)=\left(\begin{array}{cc} \lambda x+x\overline{r}_1(x,y) & (\lambda+\varepsilon)x \vspace{0.2cm} \\ \nu y+y\overline{r}_2(x,y) & \nu y \end{array}\right),\]
and observe that
	\[\det J(x,y)=xy\bigl(\nu\overline{r}_1(x,y)-\nu\varepsilon-(\lambda+\varepsilon)\overline{r}_2(x,y)\bigr).\]
Therefore, there is $\delta>0$ such that if $0<x<\delta$ and $0<y<\delta$, then $\det J(x,y)>0$ and thus the vectors $X(x,y)$ and $X_\varepsilon(x,y)$ have positive orientation. Hence, if $D_\varepsilon$ is the Dulac map associated with $X_\varepsilon$, it follows that $D(x)\leqslant D_\varepsilon(x)$, for every $0<x<\delta$. See Figure~\ref{Fig21}.
\begin{figure}[ht]
	\begin{center}
		\begin{minipage}{5.5cm}
			\begin{center}
				\begin{overpic}[height=4cm]{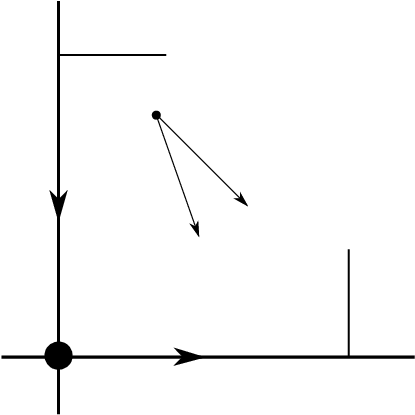} 
					\put(-11,84){$y=\delta$}
					\put(72,3){$x=\delta$}
					\put(5,5){$p$}
					\put(25,76){$(x,y)$}
					\put(60,48){$X_\varepsilon(x,y)$}
					\put(40,33){$X(x,y)$}
				\end{overpic}
			\end{center}
		\end{minipage}
		\begin{minipage}{5.5cm}
			\begin{center}
				\begin{overpic}[height=4cm]{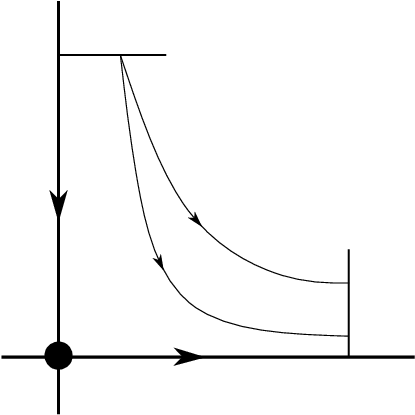} 
					\put(-11,84){$y=\delta$}
					\put(72,3){$x=\delta$}
					\put(5,5){$p$}
					\put(25,90){$x$}
					\put(85,30){$D_\varepsilon(x)$}
					\put(85,18){$D(x)$}
				\end{overpic}
			\end{center}
		\end{minipage}
	\end{center}
	\caption{Illustration of $X$, $X_\varepsilon$, $D$ and $D_\varepsilon$.}\label{Fig21}
\end{figure}
Since $X_\varepsilon$ is linear, it follows that its flow $\varphi$ is given by
	\[\varphi(t,x,y)=\bigl(xe^{(\lambda+\varepsilon)t},ye^{\nu t}\bigr),\]
and thus we have $D_\varepsilon(x)=ye^{\nu t_0}$, where $t_0>0$ is such that $xe^{(\lambda+\varepsilon)t_0}=\delta$. Hence, $t_0=\frac{1}{\lambda+\varepsilon}\ln\frac{\delta}{x}$ and thus we have,
	\[D_\varepsilon(x)=\delta\left(\exp\ln\frac{\delta}{x}\right)^{\frac{\nu}{\lambda+\varepsilon}}=\delta^{1-\frac{|\nu|}{\lambda-\varepsilon}}x^{\frac{|\nu|}{\lambda-\varepsilon}}.\]
This proves the first inequality of the proposition. The other one can be obtained by considering $X_\varepsilon(x,y)=((\lambda-\varepsilon)x,\nu y)$. {\hfill$\square$}

\subsection{Transition map near a semi-hyperbolic singularity}\label{sec2.2}

\begin{theorem}[Theorem~$3.2.2$ of \cite{DumRouRou}]\label{Semihyperbolic}
	Let $X_\mu$ be a $C^\infty$-planar vector field depending in a $C^\infty$-way on a parameter $\mu\in\Lambda\subset\mathbb{R}^r$, $r\geqslant 1$. Suppose that at $\mu=\mu_0$ we have a semi-hyperbolic singularity at the origin $O$. Let also $B$ be a small enough neighborhood of $O$. If $\Lambda$ is a small enough neighborhood of $\mu_0$, then for each $k\geqslant1$, $k\in\mathbb{N}$, there exists a $C^k$-family of diffeomorphisms on $B$ such that at this new coordinate system, $X_\mu$ is given by
		\[\dot x = g(x,\mu), \quad \dot y = \pm y,\]
	except by the multiplication of a $C^k$-positive function. Furthermore, $g$ is a function of class $C^k$ satisfying,
		\[g(0,\mu_0)=\frac{\partial g}{\partial x}(0,\mu_0)=0.\]
\end{theorem}

Let $X_\mu$ be a $C^\infty$-planar vector field depending in a $C^\infty$-way on a parameter $\mu\in\Lambda\subset\mathbb{R}^r$, $r\geqslant 1$. Suppose that at $\mu=\mu_0$ we have a semi-hyperbolic singularity $p_0$ with a hyperbolic sector (e.g. a saddle-node or a degenerated saddle). At $\mu=\mu_0$, let $\lambda\in\mathbb{R}\backslash\{0\}$ be the unique non-zero eigenvalue of $p_0$. Reversing the time if necessary, we can assume that $\lambda<0$. Locally at $p_0$, it follows from Theorem~\ref{Semihyperbolic} that we can suppose that $X_\mu$ is given by
	\[\dot x = g(x,\mu), \quad \dot y=-y,\]
with $g$ of class $C^k$ (for any $k$ large enough) and satisfying,
	\[g(0,\mu_0)=\frac{\partial g}{\partial x}(0,\mu_0)=0.\]
In this new coordinate system given by Theorem~\ref{Semihyperbolic}, and at $\mu=\mu_0$, let $\sigma$ and $\tau$ be two small cross sections of the axis $Oy^+$ and $Ox^+$ (which are, respectively, the stable and the central manifolds of $p_0$). As in subsection~\ref{sec2.1}, we can suppose that $\sigma$ and $\tau$ are parametrized by $x\in[0,x_0]$ and $y\in[0,y_0]$, with $x=0$ and $y=0$ corresponding to $Oy^+\cap\sigma$ and $Ox^+\cap\tau$, respectively (see Figure~\ref{Fig16}). Let $x^*(\mu)$ be the largest solution of $g(x,\mu)=0$ and observe that $x^*(\mu_0)=0$. For each $\mu\in\Lambda$, let $\sigma(\mu)\subset\sigma$ be given by $x\in[x^*(\mu),x_0]$ and let
	\[C=\bigcup_{\mu\in\Lambda}\{\sigma(\mu),\mu\}\subset\sigma\times\Lambda.\]
As in Section~\ref{sec2.1}, in this new coordinate system the flow of $X_\mu$ defines a transition map $F\colon C\to(0,y_0]$.
	
\begin{theorem}[Theorem~$3$ of \cite{DumRouRou}]
	Let $X_\mu$ and $F$ be as above. Then
	\begin{equation}\label{0}
		F(x,\mu)=Ye^{-T(x,\mu)},
	\end{equation}
	where $Y>0$ and $T\colon C\to\mathbb{R}^+$ is the time function from $\sigma(\mu)$ to $\tau$. Moreover, if $\mu=(\mu_1,\dots,\mu_r)$, then for any $k$, $m\in\mathbb{N}$ and for any $(i_0,\dots,i_r)\in\mathbb{N}^{n+1}$ with $i_0+\dots+i_r=m$, we have
	\begin{equation}\label{3}
		\frac{\partial^m F}{\partial x^{i_0}\partial\mu_1^{i_1}\dots\partial\mu_r^{i_r}}(x,\mu)=O(||(x,\mu)||^k).
	\end{equation}
\end{theorem}

\subsection{Transition map near a tangential singularity}\label{sec2.3}

Let $p_0$ be a tangential singularity of $\Gamma^n$ and $X_s$, $X_u$ be the stable and unstable components of $Z$ defined at $p_0$ with $\mu=\mu_0$. Let $B$ be a small enough neighborhood of $p_0$ and $\Phi:B\times\Lambda\to\mathbb{R}^2$ be a $C^\infty$ change of coordinates such that $\Phi(p_0,\mu_0)=(0,0)$ and $\Phi(B\cap\Sigma)=Ox$. Let $l_s=\Phi(B\cap L_s)$, $l_u=\Phi(B\cap L_u)$ and $\tau_s$, $\tau_u$ two small enough cross sections of $l_s$ and $l_u$, respectively. Let also,
	\[\sigma=[0,\varepsilon)\times\{0\}, \quad \sigma=(-\varepsilon,0]\times\{0\}, \text{ or } \sigma=\{0\}\times[0,\varepsilon),\]
depending on $\Gamma^n$. It follows from Andrade et al \cite{AndGomNov} that $\Phi$ can be choose such that the transition maps $T^{s,u}\colon\sigma\times\Lambda\to\tau_{s,u}$, given by the flow of $X_{s,u}$ in this new coordinate system, are well defined and given by,
\begin{equation}\label{26}
	\begin{array}{l}
	\displaystyle T^u(h_\mu(x),\mu)=k_u(\mu)x^{n_u}+O(x^{n_u+1})+\sum_{i=0}^{n_u-2}\lambda_i^u(\mu)x_i, \vspace{0.2cm} \\ \displaystyle T^s(h_\mu(x),\mu)=k_s(\mu)x^{n_s}+O(x^{n_s+1})+\sum_{i=0}^{n_s-2}\lambda_i^s(\mu)x_i,
	\end{array}
\end{equation}
with $\lambda_i^{s,u}(\mu_0)=0$, $k_{s,u}(\mu_0)\neq0$, $h_\mu\colon\mathbb{R}\to\mathbb{R}$ a diffeomorphism and with $h_\mu$ and $\lambda_i^{s,u}$ depending continuously on $\mu$. For examples of such maps, see Figure~\ref{Fig17}.
\begin{figure}[ht]
	\begin{center}
		\begin{minipage}{4cm}
			\begin{center}
				\begin{overpic}[width=3.8cm]{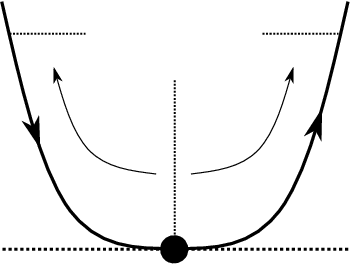} 
					\put(62,35){$T^u$}
					\put(30,35){$T^s$}
				\end{overpic}
				
				$(a)$
			\end{center}
		\end{minipage}
		\begin{minipage}{4cm}
			\begin{center}
				\begin{overpic}[width=3.8cm]{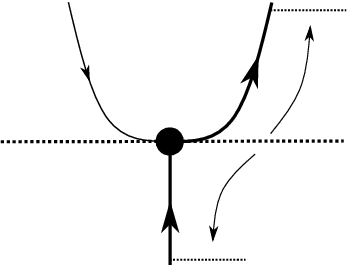} 
					\put(90,50){$T^u$}
					\put(70,15){$T^s$}
				\end{overpic}
				
				$(b)$
			\end{center}
		\end{minipage}
		\begin{minipage}{4cm}
			\begin{center}
				\begin{overpic}[width=3.8cm]{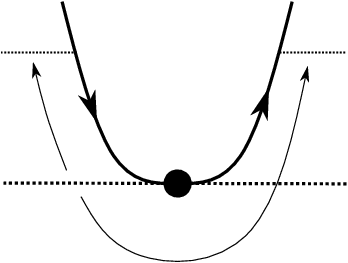} 
					\put(80,5){$T^u$}
					\put(0,30){$T^s$}
				\end{overpic}
		
				$(c)$
			\end{center}
	\end{minipage}
	\end{center}
\caption{Illustration of the maps $T^u$ and $T^s$. The choice between $(a)$ and $(c)$ depends on whether the Poincar\'e map is defined in the bounded or unbounded region delimited by $\Gamma^n$.}\label{Fig17}
\end{figure}	

\section{Proofs of Theorems~\ref{Main1} and \ref{Main3}}\label{sec3}

\noindent {\it Proof of Theorem~\ref{Main1}.} For simplicity, we assume that $\Sigma=h^{-1}(0)$ has one component and thus $Z=(X_1,X_2;\Sigma)$ has two components. Moreover, we assume that $\Gamma^n=\Gamma^3$ is composed by two tangential singularities $p_1$, $p_2$ and by a hyperbolic saddle $p_3$. See Figure~\ref{Fig2}. The general case follows similarly. Let $B_i$ be a small enough neighborhood of $p_i$ and let $\Phi_i\colon B_i\times\{\mu_0\}\to\mathbb{R}^2$ be the change of variables chosen as in Section~\ref{sec2.3}, $i\in\{1,2\}$. Let also $B_3$ be a neighborhood of $p_3$ and $\Phi_3\colon B_3\times\{\mu_0\}\to\mathbb{R}^2$ be the change of variables chosen as in Section~\ref{sec2.1}. Knowing that $T_i^{s,u}\colon\sigma_i\times\{\mu_0\}\to\tau_i^{s,u}$ and $D\colon\sigma\times\{\mu_0\}\to \tau$, let,
	\[\overline{\sigma}_i=\Phi_i^{-1}(\sigma_i), \quad \overline{\tau}_i^s=\Phi_i^{-1}(\tau_i^s), \quad \overline{\tau}_i^u=\Phi_i^{-1}(\tau_i^u),\]
	\[J_s=\Phi_3^{-1}(\sigma), \quad J_u=\Phi_3^{-1}(\tau),\]
with $i\in\{1,2\}$. Let also, 
	\[\overline{\rho}_1:\tau_1^u\to\tau_2^s, \quad \overline{\rho}_2:\tau_2^u\to J_s, \quad\overline{\rho_3}:J_u\to\tau_1^s,\]
be defined by the flow of $X_1$ and $X_2$. See Figure~\ref{Fig2}. Finally let, 
	\[\rho_1=\Phi_2\circ\overline{\rho}_1\circ\Phi_1^{-1}, \quad \rho_2=\Phi_3\circ\overline{\rho}_2\circ\Phi_2^{-1}, \quad \rho_3=\Phi_1\circ\overline{\rho}_3\circ\Phi_3^{-1},\]
and,
	\[\overline{T}_i^s=\Phi_i^{-1}\circ T_i^s\circ\Phi_i, \quad \overline{T}_i^u=\Phi_i^{-1}\circ T_i^u\circ\Phi_i, \quad \overline{D}=\Phi_3^{-1}\circ D\circ\Phi_3,\]
with $i\in\{1,2\}$. See Figure~\ref{Fig2}.
\begin{figure}[ht]
\begin{center}
\begin{overpic}[width=12.5cm]{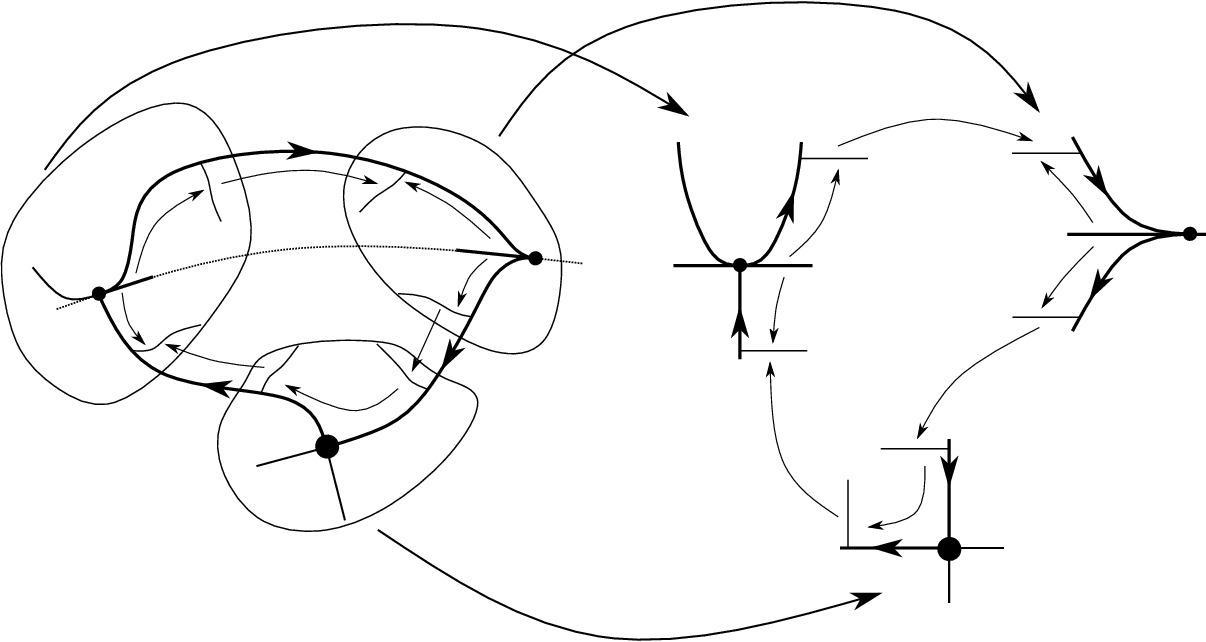} 
	\put(0,20){$B_1$}
	\put(46,25){$B_2$}
	\put(16.5,10){$B_3$}
	\put(12.5,50){$\Phi_1$}
	\put(65,50){$\Phi_2$}
	\put(52,1){$\Phi_3$}
	\put(24,36){$\overline{\rho}_1$}
	\put(32,26){$\overline{\rho}_2$}
	\put(18,24){$\overline{\rho}_3$}
	\put(14,32.5){$\overline{T}_1^u$}
	\put(13,27.25){$\overline{T}_1^s$}
	\put(32.5,34){$\overline{T}_2^s$}
	\put(34,29.25){$\overline{T}_2^u$}
	\put(27,21){$\overline{D}$}
	\put(6.5,30.5){$p_1$}
	\put(42.5,33.5){$p_2$}
	\put(24,13.5){$p_3$}
	\put(72,11.5){$D$}
	\put(75,44.5){$\rho_1$}
	\put(80,21){$\rho_2$}
	\put(61,15){$\rho_3$}
	\put(69,34){$T_1^u$}
	\put(65.5,26.5){$T_1^s$}
	\put(84,36){$T_2^s$}
	\put(83,30){$T_2^u$}
\end{overpic}
\end{center}
\caption{Illustration of the maps used in the proof of Theorem~\ref{Main1}.}\label{Fig2}
\end{figure}
Let $\nu<0<\lambda$ be the eigenvalues of $p_3$ and denote $r=\frac{|\nu|}{\lambda}$. Let also $n_{i,s}$ and $n_{i,u}$ denote the stable and unstable order of $p_i$, $i\in\{1,2\}$. Suppose $r(\Gamma^n)>1$. Given $\varepsilon>0$, it follows from Sections~\ref{sec2.1} and \ref{sec2.3} that,
	\[T_i^s(x)=k_{i,s}x^{n_{i,s}}+O(x^{n_{i,s}+1}), \quad T_i^u(x)=k_{i,u}x^{n_{i,u}}+O(x^{n_{i,u}+1}),\]
	\[D(x)<Cx^{\frac{|\nu|}{\lambda+\varepsilon}}, \quad \rho_j(x)=a_jx+O(x^2),\]
with $k_{i,s}$, $k_{i,u}$, $a_j$, $a\neq0$, $C>0$, $i\in\{1,2\}$ and $j\in\{1,2,3\}$. Since $\varepsilon>0$ is arbitrary, it follows that if we define
	\[\pi=\rho_2\circ T_2^u\circ(T_2^s)^{-1}\circ\rho_1\circ T_1^u\circ(T_1^s)^{-1}\circ\rho_3\circ D,\]
then one can conclude that,
	\[\pi(x)\leqslant Kx^{r_0}+O(x^{n_0+1}),\]
with $K\neq0$ and $1<r_0<r(\Gamma^n)$. Hence, if $x$ is small enough we conclude that $\pi(x)<x$. The result now follows from the fact that the first return map
	\[P=\overline{\rho}_2\circ\overline{T}_2^u\circ\left(\overline{T}_2^s\right)^{-1}\circ\overline{\rho}_1\circ\overline{T}_1^u\circ\left(\overline{T}_1^s\right)^{-1}\circ\overline{\rho}_3\circ\overline{D},\]
satisfies $P=\Phi_3^{-1}\circ\pi\circ\Phi_3$. If $r(\Gamma^n)<1$, the results follows by inverting the time variable. {\hfill$\square$}

\noindent {\it Proof of Theorem~\ref{Main3}.} Let us suppose that every singularity of $\Gamma^n$ is attracting (see Definition~\ref{Def4}). Following the proof of Theorem~\ref{Main1}, we observe that the Poincar\'e map, when well defined, can be written as the composition
	\[P_\mu=G_k\circ F_k \circ \dots \circ G_1\circ F_1,\]
where each $F_i$ is the transition map near a hyperbolic saddle (given by \eqref{MourtadaForm}), a semi-hyperbolic singularity (given by \eqref{0}), or a tangential singularity (given by \eqref{26}), and each $G_i$ is regular transition given by the flow of $Z$, i.e. a $C^\infty$-diffeomorphism in $x$. We call $y_1=F_1(x_1)$, $x_2=G_1(y_1)$, $\dots$, $y_k=F_k(x_k)$, $x_{k+1}=G_k(y_k)$. Thus,
	\[P_\mu'(x_1)=G_k'(y_k)F_k'(x_k)\dots G_1'(y_1)F_1'(x_1).\]
Therefore, it follows from \eqref{2}, \eqref{3} and \eqref{26} that for all $\varepsilon>0$ there exists a neighborhood $\Lambda$ of $\mu_0$ and neighborhoods $W_i$ of $x_i=0$, $i\in\{1,\dots,k+1\}$, such that if $x_1\in W_1$, then $x_i\in W_i$ and $|F_i'(x_i)|<\varepsilon$, for all $i\in\{1,\dots,k+1\}$ and for all $\mu\in\Lambda$. Also, if $\Lambda$ and each $W_i$ are small enough, then each $G_i'(y_i)$ is bounded, and bounded away from zero. Since $\varepsilon>0$ is arbitrarily small, it follows that $P_\mu'(x_1)$ is also arbitrarily small, for $(x_1,\mu)\in W_1\times \Lambda$. Therefore, the derivative of the displacement map $d_\mu(x_1)=P_\mu(x_1)-x_1$ cannot vanish and thus at most one limit cycle bifurcate from $\Gamma^n$. Moreover, if it does, it is hyperbolic and stable. The other case follows by reversing the time variable. {\hfill$\square$}

Unlike Theorems~\ref{Main1} and \ref{Main3}, to obtain Theorem~\ref{Main4} it will be necessary to work on some technicalities about the displacement maps of a polycycle. We will deal with that at Sections~\ref{sec4} and \ref{sec5}.

\section{The displacement map}\label{sec4}

Let $Z=(X_1,\dots,X_m;\Sigma)$ be a planar non-smooth vector field, depending in a $C^\infty$-way on a parameter $\mu\in\mathbb{R}^r$, and such that $Z$ has an elementary polycycle $\Gamma^n$ at $\mu=\mu_0$. Let also $\Lambda\subset\mathbb{R}^r$ be a small neighborhood of $\mu_0$ and from now on assume $\mu\in\Lambda$. In this section, we will study the displacement map between two singularities $p_i$ and $p_{i+1}$ of $\Gamma^n$. We will begin by the case in which both $p_i$ and $p_{i+1}$ are hyperbolic saddles. To simplify the notation, at $\mu=\mu_0$, let $p_1\in A_1$ and $p_2\in A_2$ be two hyperbolic saddles of $\Gamma^n$ with the heteroclinic connection $L_0$ such that $\omega(L_0)=p_1$, $\alpha(L_0)=p_2$ and $L_0\cap\Sigma=\{x_0\}$, $\Sigma=h^{-1}(0)$. Let $\gamma_0(t)$ be a parametrization of $L_0$ such that $\gamma_0(0)=x_0$ and $u_0$ be an unitary vector orthogonal to $T_{x_0}\Sigma$ such that $sign(\left<u_0,\nabla h(x_0)\right>)=sign(X_1h(x_0))=sign(X_2h(x_0))$. See Fi\-gure~\ref{Fig3}.
\begin{figure}[ht]
	\begin{center}
		\begin{overpic}[height=5cm]{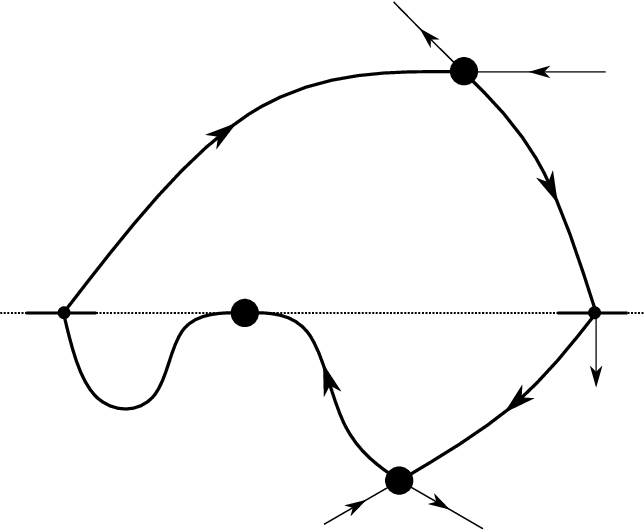} 
			\put(60,1){$p_1$}
			\put(73,76){$p_2$}
			\put(93,36){$x_0$}
			\put(94,23){$u_0$}
			\put(60,35){$\Sigma$}
			\put(85,60){$L_0$}
		\end{overpic}
	\end{center}
	\caption{Illustration of $\Gamma^n$ at $\mu=\mu_0$.}\label{Fig3}
\end{figure}
Also, define $\omega_0\in\{-1,1\}$ such that $\omega_0=1$ if the orientation of $\Gamma^n$ is counterclockwise and $\omega_0=-1$ if the orientation of $\Gamma^n$ is clockwise. We denote by $DX(p,\mu^*)$ the Jacobian matrix of $X|_{\mu=\mu^*}$ at $p$, i.e. if $X=(P,Q)$, then
	\[DX(p,\mu^*)=\left(\begin{array}{cc} \dfrac{\partial P}{\partial x_1}(p,\mu^*) & \dfrac{\partial P}{\partial x_2}(p,\mu^*) \vspace{0.2cm} \\  \dfrac{\partial Q}{\partial x_1}(p,\mu^*) & \dfrac{\partial Q}{\partial x_2}(p,\mu^*) \end{array}\right).\]
If $\Lambda$ is a small enough neighborhood of $\mu_0$, then it follows from the Implicit Function Theorem that if $\mu\in\Lambda$, then the perturbation $p_i(\mu)$ of $p_i$ is well defined and it is a hyperbolic saddle of $X_i$, with $p_i(\mu)\to p_i$ as $\mu\to\mu_0$, and with $p_i(\mu)$ of class $C^\infty$, $i\in\{1,2\}$. Let $(y_{i,1},y_{i,2})=(y_{i,1}(\mu),y_{i,2}(\mu))$ be a coordinate system with its origin at $p_i(\mu)$ and such that the $y_{i,1}$-axis and the $y_{i,2}$-axis are the one-dimensional stable and unstable spaces $E_i^s(\mu)$ and $E_i^u(\mu)$ of the linearization of $X_i(\cdot,\mu)$ at $p_i(\mu)$, $i\in\{1,2\}$. It follows from the Center-Stable Manifold Theorem (see \cite{Kel1967}) that the stable and unstable manifolds $S_i^\mu$ and $U_i^\mu$ of $X_i(\cdot,\mu)$ at $p_i(\mu)$ are given by,
	\[S_i^\mu: y_{i,2}=\Psi_{i,2}(y_{i,1},\mu), \quad U_i^\mu: y_{i,1}=\Psi_{i,1}(y_{i,2},\mu),\]
where $\Psi_{i,1}$ and $\Psi_{i,2}$ are $C^\infty$-functions, $i\in\{1,2\}$. Restricting $\Lambda$ if necessary, it follows that there exist $\delta>0$ such that, 
	\[y_i^s(\mu)=(\delta,\Psi_{i,2}(\delta,\mu))\in S_i^\mu, \quad y_i^u(\mu)=(\Psi_{i,1}(\delta,\mu),\delta)\in U_i^\mu,\]
$i\in\{1,2\}$. If $C_i(\mu)$ is the diagonalization of $DX_i(p_i(\mu),\mu)$, then at the original coordinate system $(x_1,x_2)$ we obtain,
	\[x_i^s(\mu)=p_i(\mu)+C_i(\mu)y_i^s(\mu)\in S_i^\mu, \quad x_i^u(\mu)=p_i(\mu)+C_i(\mu)y_i^u(\mu)\in U_i^\mu,\]
$i\in\{1,2\}$. Furthermore $x_i^s(\mu)$ and $x_i^u(\mu)$ are also $C^\infty$ at $\Lambda$. Let $\phi_i(t,\xi,\mu)$ be the flow of $X_i(\cdot,\mu)$ such that $\phi_i(0,\xi,\mu)=\xi$ and $L^s_0=L^s_0(\mu)$, $L^u_0=L^u_0(\mu)$ be the perturbations of $L_0$ such that $\omega(L^s_0(\mu))=p_1(\mu)$ and $\alpha(L^u_0(\mu))=p_2(\mu)$. Then it follows that,
	\[x^s(t,\mu)=\phi_1(t,x_1^s(\mu),\mu), \quad  x^u(t,\mu)=\phi_2(t,x_2^u(\mu),\mu)\]
are parametrizations of $L^s_0(\mu)$ and $L^u_0(\mu)$, respectively. Since $L_0$ intersects $\Sigma$, it follows that there are $t_0^s<0$ and $t_0^u>0$ such that $x^s(t_0^s,\mu_0)=x_0=x^u(t_0^u,\mu_0)$ and thus by the uniqueness of solutions we have,
	\[x^s(t+t_0^s,\mu_0)=\gamma_0(t)=x^u(t+t_0^u,\mu_0),\]
for $t\in[0,+\infty)$ and $t\in(-\infty,0]$, respectively. 

\begin{lemma}\label{Lemma1}
	Taking a small enough neighborhood $\Lambda$ of $\mu_0$, there exists unique $C^\infty$-functions $\tau^s(\mu)$ and $\tau^u(\mu)$ such that $\tau^s(\mu)\to t_0^s$ and $\tau^u(\mu)\to t_0^u$, as $\mu\to\mu_0$, and $x_0^s(\mu)=x^s(\tau^s(\mu),\mu)\in\Sigma$ and $x_0^u(\mu)=x^u(\tau^u(\mu),\mu)\in\Sigma$, for all $\mu\in\Lambda$. See Figure~\ref{Fig4}.
\end{lemma}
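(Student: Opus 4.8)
The argument is a routine application of the Implicit Function Theorem, in the spirit of Lemma~$1$ of \cite{PerkoHeteroclinic}; the one place where the non-smooth setting enters is that the transversality needed to apply the theorem will be supplied by the crossing-region hypothesis on $\Gamma^n$ rather than by a transversality of the heteroclinic orbit with an auxiliary cross section. The plan is therefore to measure, along the perturbed manifolds, the value of $h$ and to solve $h=0$ in the time variable.

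Concretely, I would set $G^s(t,\mu)=h\bigl(x^s(t,\mu)\bigr)$ and $G^u(t,\mu)=h\bigl(x^u(t,\mu)\bigr)$ on neighborhoods of $(t_0^s,\mu_0)$ and $(t_0^u,\mu_0)$, respectively. Since $\mu\mapsto x_1^s(\mu)$ and $\mu\mapsto x_2^u(\mu)$ are $C^\infty$, the flows $\phi_1,\phi_2$ depend in a $C^\infty$ way on $(t,\xi,\mu)$, and $h$ is $C^\infty$, both $G^s$ and $G^u$ are $C^\infty$. By construction $x^s(t_0^s,\mu_0)=x_0=x^u(t_0^u,\mu_0)\in\Sigma=h^{-1}(0)$, so $G^s(t_0^s,\mu_0)=G^u(t_0^u,\mu_0)=0$. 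Using $\frac{\partial}{\partial t}x^s(t,\mu)=X_1(x^s(t,\mu),\mu)$ and $\frac{\partial}{\partial t}x^u(t,\mu)=X_2(x^u(t,\mu),\mu)$, the chain rule gives
\[
\frac{\partial G^s}{\partial t}(t_0^s,\mu_0)=\langle \nabla h(x_0),X_1(x_0,\mu_0)\rangle=X_1h(x_0),\qquad \frac{\partial G^u}{\partial t}(t_0^u,\mu_0)=X_2h(x_0).
\]
Because $x_0$ is the unique point of $L_0\cap\Sigma$ and a neighborhood of it in $\Sigma$ is a crossing region (equivalently, by the choice of $u_0$ made above), we have $X_1h(x_0)X_2h(x_0)>0$; in particular both partial derivatives above are nonzero.

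The Implicit Function Theorem, applied to $G^s$ and to $G^u$, then yields, after shrinking $\Lambda$, unique $C^\infty$ functions $\tau^s,\tau^u\colon\Lambda\to\mathbb{R}$ with $\tau^s(\mu_0)=t_0^s$, $\tau^u(\mu_0)=t_0^u$, hence $\tau^s(\mu)\to t_0^s$ and $\tau^u(\mu)\to t_0^u$ as $\mu\to\mu_0$ by continuity, and $G^s(\tau^s(\mu),\mu)=G^u(\tau^u(\mu),\mu)=0$, i.e. $x_0^s(\mu)=x^s(\tau^s(\mu),\mu)\in\Sigma$ and $x_0^u(\mu)=x^u(\tau^u(\mu),\mu)\in\Sigma$ for all $\mu\in\Lambda$; moreover $x_0^s$ and $x_0^u$ are $C^\infty$ in $\mu$, being compositions of $C^\infty$ maps. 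The only step that genuinely requires attention is the non-vanishing of the $t$-derivative of $G^s$ and $G^u$ at the base points, and this is precisely what the crossing-region hypothesis guarantees, so I do not expect any real obstacle here.
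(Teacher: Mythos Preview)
Your proposal is correct and follows essentially the same route as the paper: define the composition $h\circ x^{s,u}$, check the value and the $t$-derivative at the base point, and invoke the Implicit Function Theorem. The only detail the paper makes explicit that you leave implicit is that, since $x_0\in\Sigma=\partial A_1$, one first takes a $C^\infty$ extension of $X_1$ to a neighborhood of $\overline{A_1}$ so that $x^s(t,\mu)$ is defined for $t$ in a full two-sided neighborhood of $t_0^s$; with that understood, the arguments coincide.
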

\begin{figure}[ht]
	\begin{center}
		\begin{overpic}[height=5cm]{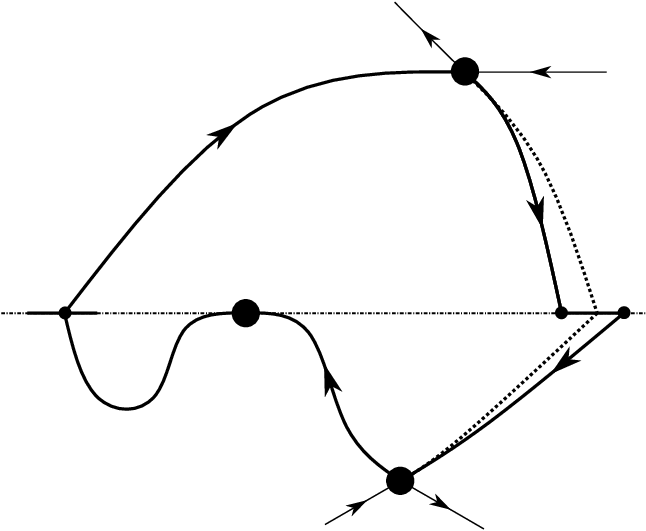} 
			\put(60,1){$p_1$}
			\put(73,76){$p_2$}
			\put(95,37){$x_0^s$}
			\put(78,28){$x_0^u$}
			\put(60,35){$\Sigma$}
			\put(73,50){$L_0^u$}
			\put(82,15){$L_0^s$}
		\end{overpic}
	\end{center}
	\caption{Illustration of $x_0^s(\mu)$ and $x_0^u(\mu)$.}\label{Fig4}
\end{figure}
\begin{proof} Let $X_1$ denote a $C^\infty$-extension of $X_1$ to a neighborhood of $\overline{A_1}$ and observe that now $x^s(t,\mu_0)$ is well defined for $|t-t_0^s|$ small enough. Knowing that $\Sigma=h^{-1}(0)$, define $S(t,\mu)=h(x^s(t,\mu))$ and observe that $S(t_0^s,\mu_0)=h(x_0)=0$ and,
	\[\frac{\partial S}{\partial t}(t_0^s,\mu_0)=\left<\nabla h(x_0),X_1(x_0)\right>\neq0.\]
It then follows from the Implicit Function Theorem that there exist a $C^\infty$-function $\tau^s(\mu)$ such that $\tau^s(\mu_0)=t_0^s$ and $S(\tau^s(\mu),\mu)=0$ and thus $x_0^s(\mu)=x^s(\tau^s(\mu),\mu)\in\Sigma$. In the same way one can prove the existence of $\tau^u$. \end{proof}

\begin{definition}\label{DefDisplacement1}
	It follows from lemma~\ref{Lemma1} that the displacement function,
		\[d(\mu)=\omega_0[x_0^u(\mu)-x_0^s(\mu)]\land u_0,\]
	where $(x_1,x_2)\land(y_1,y_2)=x_1y_2-y_1x_2$, is well defined near $\mu_0$ and it is of class $C^\infty$. See Figure~\ref{Fig5}.
\end{definition}
\begin{figure}[ht]
	\begin{center}
		\begin{overpic}[height=5cm]{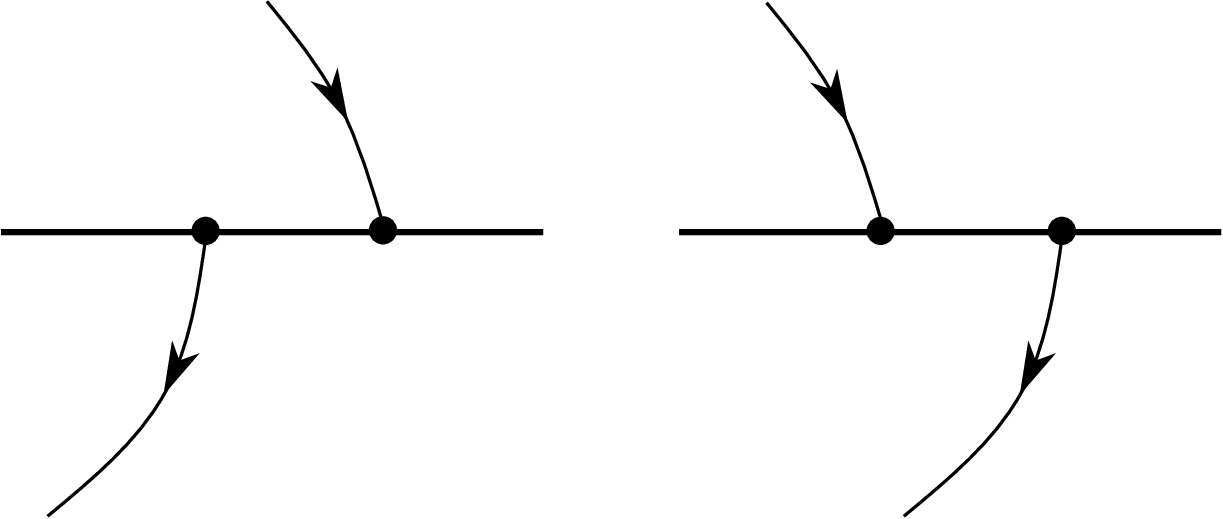} 
			\put(97,24){$\Sigma$}
			\put(42,24){$\Sigma$}
			\put(73,25){$x_0^u$}
			\put(32,25){$x_0^u$}
			\put(87,25){$x_0^s$}
			\put(17,25){$x_0^s$}
			\put(18,18){$d(\mu)>0$}
			\put(72,18){$d(\mu)<0$}
		\end{overpic}
	\end{center}
	\caption{Illustration of $d(\mu)>0$ and $d(\mu)<0$.}\label{Fig5}
\end{figure}
\begin{remark}
	We observe that $L_0$ can intersect $\Sigma$ multiple times. In this case, following Section~\ref{MR}, we write $L_0\cap\Sigma=\{x_0,x_1,\dots,x_n\}$ and let $\gamma_0(t)$ be a parametrization of $L_0$ such that $\gamma_0(t_i)=x_i$, with $t_n<\dots<t_1<t_0=0$. Therefore, applying lemma~\ref{Lemma1} one shall obtain $x_n^u(\mu)$ and then applying the  Implicit Function Theorem multiple times one shall obtain $x_{i}^u(\mu)$ as a function of $x_{i+1}^u(\mu)$, $i\in\{0,\dots,n-1\}$, and thus the displacement function is still well defined at $x_0$.
\end{remark}

Let us define,
	\[x^s_\mu(t)=\phi_1(t,x_0^s(\mu),\mu) \text{ for } t\geqslant0, \quad x^u_\mu(t)=\phi_2(t,x_0^u(\mu),\mu) \text{ for } t\leqslant 0,\]
new parametrizations of $L^s_0(\mu)$ and $L^u_0(\mu)$, respectively. In the following lemma we will denote by $X_i$ some $C^\infty$-extension of $X_i$ at some neighborhood of $\overline{A_i}$, $i\in\{1,2\}$, and thus $x^s_\mu(t)$ and $x^u_\mu(t)$ are well defined for $|t|$ small enough. 

\begin{lemma}\label{Lemma2}
	For any $\mu^*\in\Lambda$ and any $i\in\{1,\dots,n\}$ the maps
	\[\frac{\partial x^s_{\mu^*}}{\partial \mu_i}(t), \quad  \frac{\partial x^u_{\mu^*}}{\partial \mu_i}(t),\]
	are bounded as $t\to+\infty$ and $t\to-\infty$, respectively.
\end{lemma}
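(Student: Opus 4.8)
The plan is to handle only $x^s_{\mu^*}$, since the statement for $x^u_{\mu^*}$ follows by reversing time: for the field $-X_2$ the point $p_2(\mu)$ is again a hyperbolic saddle and $U_2^\mu$ is its \emph{stable} manifold, so the two assertions become formally identical. Write $y(t)=\frac{\partial x^s_{\mu^*}}{\partial\mu_i}(t)$. Differentiating the defining relation $\frac{d}{dt}x^s_\mu(t)=X_1(x^s_\mu(t),\mu)$, $x^s_\mu(0)=x_0^s(\mu)$, with respect to $\mu_i$ and setting $\mu=\mu^*$ shows that $y$ solves the linear nonautonomous equation
\[\dot y=DX_1\bigl(x^s_{\mu^*}(t),\mu^*\bigr)\,y+\frac{\partial X_1}{\partial\mu_i}\bigl(x^s_{\mu^*}(t),\mu^*\bigr),\qquad y(0)=\frac{\partial x_0^s}{\partial\mu_i}(\mu^*),\]
where the initial vector is finite because $x_0^s$ is $C^\infty$ by Lemma~\ref{Lemma1}. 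On any compact interval $[0,T]$ the map $y$ is continuous, hence bounded; so the whole content of the lemma is the behaviour of $y(t)$ as $t\to+\infty$, which is governed by the dynamics of $X_1(\cdot,\mu^*)$ near the hyperbolic saddle $p_1(\mu^*)$, since $x^s_{\mu^*}(t)$ lies on the stable manifold $S_1^{\mu^*}$ and tends to $p_1(\mu^*)$ exponentially fast.

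Next I would straighten the stable manifold in a parameter-dependent way. Since $S_1^\mu$ is the graph $y_{1,2}=\Psi_{1,2}(y_{1,1},\mu)$ with $\Psi_{1,2}$ of class $C^\infty$ and $\Psi_{1,2}(0,\mu)\equiv0$, the change of coordinates $(u,v)=\bigl(y_{1,1},\,y_{1,2}-\Psi_{1,2}(y_{1,1},\mu)\bigr)$ is $C^\infty$ jointly in $(x,\mu)$, fixes $p_1(\mu)$, and carries $S_1^\mu$ onto $\{v=0\}$ for every $\mu$ near $\mu^*$. Choosing $T$ large enough that $x^s_{\mu^*}(t)$ stays in this chart for $t\ge T$, the trajectory becomes $(u_{\mu^*}(t),0)$, where $u_\mu$ solves the \emph{scalar} equation $\dot u=f(u,\mu)$ obtained by restricting the transformed field to $\{v=0\}$. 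The two facts that make the argument work, both consequences of invariance, are $f(0,\mu)\equiv0$ (the origin is the equilibrium for every $\mu$) and $\frac{\partial f}{\partial u}(0,\mu^*)=\nu_1(\mu^*)<0$.

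Then I would run the scalar variational equation from $t=T$ onward: $z(t)=\frac{\partial u_{\mu^*}}{\partial\mu_i}(t)$ satisfies $\dot z=\frac{\partial f}{\partial u}(u_{\mu^*}(t),\mu^*)\,z+\frac{\partial f}{\partial\mu_i}(u_{\mu^*}(t),\mu^*)$ with finite initial value $z(T)$. As $u_{\mu^*}(t)\to0$ exponentially, the coefficient tends to $\nu_1(\mu^*)<0$; and because $f(0,\mu)\equiv0$ forces $\frac{\partial f}{\partial\mu_i}(0,\mu)\equiv0$, the forcing term satisfies $\bigl|\frac{\partial f}{\partial\mu_i}(u_{\mu^*}(t),\mu^*)\bigr|\le C\,|u_{\mu^*}(t)|\to0$ exponentially. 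A direct Gr\"onwall estimate then gives that $z$ is bounded (in fact $z(t)\to0$) as $t\to+\infty$. Transforming back, for $t\ge T$ the point $x^s_{\mu^*}(t)$ is the image of $(u_{\mu^*}(t),0)$ under a fixed $C^\infty$ map (the inverse of the straightening chart composed with the affine, $C^\infty$-in-$\mu$, passage to the $(x_1,x_2)$-coordinates); differentiating in $\mu_i$, the vector $y(t)$ is the sum of the space-derivative of this map applied to $(z(t),0)$ and of its $\mu_i$-derivative evaluated at $(u_{\mu^*}(t),0)$. Both terms are bounded — the first because $(u_{\mu^*}(t),0)$ stays in a compact part of the chart while $z$ is bounded, the second because the map is $C^\infty$ and $(u_{\mu^*}(t),0)\to0$. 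Together with boundedness on $[0,T]$ this settles $x^s_{\mu^*}$, and the time-reversal remark disposes of $x^u_{\mu^*}$.

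The step I expect to be the real obstacle is precisely the one the straightening is designed to remove: a priori the variational flow of $X_1(\cdot,\mu^*)$ expands like $e^{\lambda_1(\mu^*)t}$ along the unstable direction of $p_1(\mu^*)$, so without extra structure one cannot rule out unbounded $y$. Confining the relevant derivative to the stable manifold — where the linearized equation is asymptotically driven by $\nu_1(\mu^*)<0$ — and noticing that the parameter-dependence produces there only an \emph{exponentially small} forcing (because the equilibrium persists for all $\mu$), is what keeps the expanding mode from being switched on. Beyond this, the only thing to verify is that the straightening chart can be built $C^\infty$-jointly in $(x,\mu)$, which is immediate from the regularity of $\Psi_{1,2}$ already recorded above.
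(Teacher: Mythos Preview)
Your argument is correct and takes a genuinely different route from the paper's. The paper proceeds by first computing the derivative of the saddle itself: differentiating the identity $X_1(p_1(\mu),\mu)\equiv0$ in $\mu_i$ yields $\frac{\partial p_1}{\partial\mu_i}(\mu^*)=-\bigl(DX_1(p_1(\mu^*),\mu^*)\bigr)^{-1}\frac{\partial X_1}{\partial\mu_i}(p_1(\mu^*),\mu^*)$, and then it simply asserts --- invoking only ``the $C^\infty$-differentiability of the flow near $p_1(\mu)$'' --- that $\frac{\partial x^s_{\mu^*}}{\partial\mu_i}(t)$ converges to this vector as $t\to+\infty$. So the paper identifies the limit explicitly but leaves the interchange of $\lim_{t\to\infty}$ and $\partial_{\mu_i}$ essentially unjustified. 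You instead write the variational equation, straighten the stable manifold by a $\mu$-dependent chart so that the trajectory lives on $\{v=0\}$ for \emph{all} nearby $\mu$, observe that in these coordinates the equilibrium is fixed at the origin (whence the parameter-forcing vanishes there), and close with a scalar Gr\"onwall bound. Your approach is self-contained and makes transparent why the expanding direction is never switched on; the paper's is shorter and gives the exact value of the limit, which you could recover from your back-transformation step by evaluating the $\mu_i$-derivative of the chart at the origin.
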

\begin{proof} Let us consider a small perturbation of the parameter in the form,
\begin{equation}\label{20}
	\mu=\mu^*+\varepsilon e_i,
\end{equation}
where $e_i$ is the \emph{ith} vector of the canonical base of $\mathbb{R}^r$. The corresponding perturbation of the singularity $p_2(\mu^*)$ takes the form,
	\[p_2(\mu)=p_2(\mu^*)+\varepsilon y_0+o(\varepsilon).\]
Knowing that $X_2(p_2(\mu),\mu)=0$ for any $\varepsilon$ it follows that,
	\[0=\frac{\partial X_2}{\partial \varepsilon}(p_2(\mu),\mu)=DX_2(p_2(\mu),\mu)[y_0+o(\varepsilon)]+\frac{\partial X_2}{\partial \mu}(p_2(\mu),\mu)e_i,\]
and thus applying $\varepsilon\to0$ we obtain, 
	\[y_0=-F_0^{-1}G_0e_i,\]
where $F_0=DX_2(p_2(\mu^*),\mu^*)$ and $G_0=\frac{\partial X_2}{\partial \mu}(p_2(\mu^*),\mu^*)$ (observe that $F_0$ is reversible because $p_2$ is a hyperbolic saddle). Hence,
	\[\frac{\partial p_2}{\partial \mu_i}(\mu^*)=-F_0^{-1}G_0e_i.\]
Therefore, it follows from the $C^\infty$-differentiability of the flow near $p_2(\mu)$ that,
	\[\lim\limits_{t\to-\infty}\frac{\partial x_{\mu^*}^u}{\partial \mu_i}(t)=\frac{\partial p_2}{\partial \mu_i}(\mu^*)=-F_0^{-1}G_0e_i\]
and thus we have the proof for $x_{\mu^*}^u$. The proof for $x_{\mu^*}^s$ is similar. \end{proof}

Let $\theta_i\in(-\pi,\pi)$ be the angle between $X_i(x_0)$ and $u_0$, $i\in\{1,2\}$. See Figure~\ref{Fig6}.
\begin{figure}[ht]
	\begin{center}
		\begin{overpic}[height=5cm]{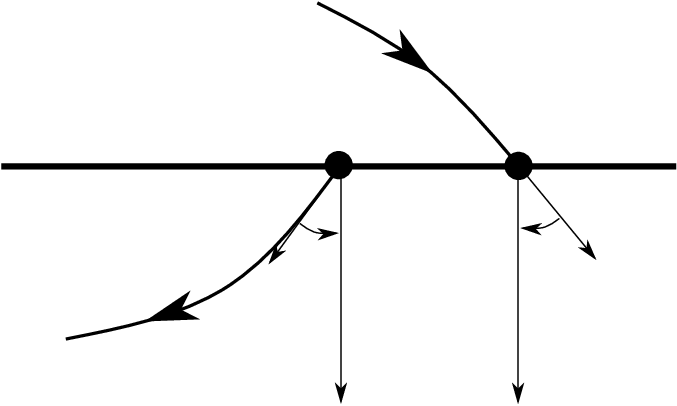} 
			\put(45,20){$\theta_1$}
			\put(79,20){$\theta_2$}
			\put(0,36.5){$\Sigma$}
			\put(52,0){$u_0$}
			\put(78,0){$u_0$}
		\end{overpic}
	\end{center}
	\caption{Illustration of $\theta_1>0$ and $\theta_2<0$.}\label{Fig6}
\end{figure}
For $i\in\{1,2\}$ we denote by $M_i$ the rotation matrix of angle $\theta_i$, i.e.
	\[M_i=\left(\begin{array}{cc} \cos\theta_i & -\sin\theta_i \\ \sin\theta_i & \cos\theta_i\end{array}\right).\]
Following Perko \cite{PerkoHeteroclinic}, we define
	\[\begin{array}{r}
		n^u(t,\mu)=\omega_0[x_\mu^u(t)-x_0]\land u_0, \quad
		n^s(t,\mu)=\omega_0[x_\mu^s(t)-x_0]\land u_0.
	\end{array}\]
It then follows from Definition~\ref{DefDisplacement1} that,
	\[d(\mu)=n^u(0,\mu)-n^s(0,\mu),\]
and thus,
\begin{equation}\label{4}
	\frac{\partial d}{\partial \mu_j}(\mu_0)=\frac{\partial n^u}{\partial \mu_j}(0,\mu_0)-\frac{\partial n^s}{\partial \mu_j}(0,\mu_0).
\end{equation}
Therefore, to understand the displacement function $d(\mu)$, it is enough to understand $n^u$ and $n^s$. Let $X_i=(P_i,Q_i)$, $i\in\{1,2\}$. Knowing that $\gamma_0$ is a parametrization of $L_0$ such that $\gamma_0(0)=x_0$, let $L_0^+=\{\gamma_0(t):t>0\}\subset A_1$ and,
\begin{equation}\label{24}
	I_j^+=\int_{L_0^+}e^{D_1(t)}\left[(M_1X_1)\land\frac{\partial X_1}{\partial \mu_j}(\gamma_0(t),\mu_0)-\sin\theta_1R_{1,j}(\gamma_0(t),\mu_0)\right]dt,
\end{equation}
where,
\[D_i(t)= -\int_{0}^{t}\textnormal{div}X_i(\gamma_0(s),\mu_0)ds,\]
and,
	\[R_{i,j} = \dfrac{\partial P_i}{\partial \mu_j}\left[\left(\dfrac{\partial Q_i}{\partial x_1}+\dfrac{\partial P_i}{\partial x_2}\right)Q_i+\left(\dfrac{\partial P_i}{\partial x_1}-\dfrac{\partial Q_i}{\partial x_2}\right)P_i\right] + \dfrac{\partial Q_i}{\partial \mu_j}\left[\left(\dfrac{\partial P_i}{\partial x_2}+\dfrac{\partial Q_i}{\partial x_1}\right)P_i+\left(\dfrac{\partial Q_i}{\partial x_2}-\dfrac{\partial P_i}{\partial x_1}\right)Q_i\right],\]
$i\in\{1,2\}$, $j\in\{1,\dots,r\}$. 

\begin{proposition}\label{MainProp1}
	For any $j\in\{1,\dots,r\}$ it follows that,
	\[\frac{\partial n^s}{\partial \mu_j}(0,\mu_0)=\frac{\omega_0}{||X_1(x_0,\mu_0)||} I_j^+.\]
\end{proposition}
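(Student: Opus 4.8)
The strategy is the classical Melnikov/Perko computation adapted to the non-smooth setting: parametrize the perturbed stable manifold by the flow, differentiate $n^s(0,\mu)$ with respect to $\mu_j$, and reduce the derivative to an integral along the unperturbed connection $L_0^+$ by solving the first-variational equation. The key point that makes everything work is that $n^s(0,\mu)$ only sees the \emph{component} $X_1$ (since $L_0^+\subset A_1$ and $x_0^s(\mu)$ approaches $p_1(\mu)$ along $S_1^\mu$), so the entire argument takes place inside the smooth vector field $X_1$ (extended $C^\infty$ past $\overline{A_1}$ as in Lemma~\ref{Lemma2}), and the only non-smooth ingredient is the boundary term at $t=0$, i.e.\ the geometry of how $L_0^s(\mu)$ crosses $\Sigma$.

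\textbf{Step 1: set up the boundary term.} First I would write $n^s(0,\mu)=\omega_0[x_0^s(\mu)-x_0]\land u_0$ with $x_0^s(\mu)=x^s(\tau^s(\mu),\mu)=x^s_\mu(0)$ and differentiate, using $\frac{\partial x^s}{\partial t}(t_0^s,\mu_0)=X_1(x_0,\mu_0)$ and $h(x^s(\tau^s(\mu),\mu))\equiv0$, to get $\frac{\partial\tau^s}{\partial\mu_j}(\mu_0)$ from the Implicit Function Theorem as in Lemma~\ref{Lemma1}. A short computation then shows that the ``drift along the orbit'' contribution proportional to $X_1(x_0,\mu_0)\land u_0$ cancels (this wedge is $\pm\|X_1(x_0,\mu_0)\|\sin\theta_1$), leaving $\frac{\partial n^s}{\partial\mu_j}(0,\mu_0)=\omega_0\,\frac{\partial x^s_{\mu^*}}{\partial\mu_j}(0)\big|_{\mu^*=\mu_0}\land u_0$ together with correction terms that get absorbed into the $R_{1,j}$ piece. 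This is where the rotation matrix $M_1$ and the angle $\theta_1$ enter: one rewrites $v\land u_0$ as a suitably rotated inner product so that the integrand below takes the stated shape.

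\textbf{Step 2: solve the variational equation and integrate.} Let $v(t)=\frac{\partial x^s_{\mu^*}}{\partial\mu_j}(t)\big|_{\mu^*=\mu_0}$. Differentiating $\dot x^s_\mu(t)=X_1(x^s_\mu(t),\mu)$ in $\mu_j$ gives the inhomogeneous linear equation $\dot v=DX_1(\gamma_0(t),\mu_0)v+\frac{\partial X_1}{\partial\mu_j}(\gamma_0(t),\mu_0)$ along $L_0^+$. By Lemma~\ref{Lemma2}, $v(t)$ is bounded as $t\to+\infty$, in fact $v(t)\to\frac{\partial p_1}{\partial\mu_j}(\mu_0)$, and $X_1(\gamma_0(t),\mu_0)\to0$ exponentially; this boundedness is exactly what is needed to kill the boundary term at $t=+\infty$. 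Then I would apply the planar variation-of-constants formula, contract with $u_0$ (equivalently, look at the scalar quantity $\omega_0\,v(t)\land u_0$), and use Liouville's formula so that the fundamental matrix produces the factor $e^{D_1(t)}=\exp(-\int_0^t\operatorname{Div}X_1(\gamma_0(s),\mu_0)\,ds)$. Integrating from $0$ to $+\infty$ and evaluating the boundary contributions gives $\frac{\partial n^s}{\partial\mu_j}(0,\mu_0)=\frac{\omega_0}{\|X_1(x_0,\mu_0)\|}I_j^+$, the wedge $(M_1X_1)\land\frac{\partial X_1}{\partial\mu_j}$ arising from the rotated projection onto the normal direction $u_0$ and the $\sin\theta_1\,R_{1,j}$ term collecting the second-order corrections from Step 1 (the normalization $1/\|X_1(x_0,\mu_0)\|$ comes from inverting the transversality factor $\langle\nabla h(x_0),X_1(x_0)\rangle$).

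\textbf{Main obstacle.} The routine part is the variation-of-constants integration; the delicate part is bookkeeping the boundary term at $t=0$ correctly, i.e.\ showing that all the terms involving $\frac{\partial\tau^s}{\partial\mu_j}$, the curvature of $\Sigma$ at $x_0$, and the angle $\theta_1$ reorganize precisely into the $-\sin\theta_1\,R_{1,j}$ integrand and the $(M_1X_1)\land\tfrac{\partial X_1}{\partial\mu_j}$ term rather than leaving a leftover boundary contribution. This is exactly the computation that Perko carries out in Lemma~2 of \cite{PerkoHeteroclinic} in the smooth heteroclinic case; the adaptation here is essentially formal once one notices that near $x_0^s(\mu)$ only $X_1$ is relevant, so I would follow that reference closely, being careful that the section $\Sigma=h^{-1}(0)$ replaces Perko's transversal and that $\omega_0$ tracks the orientation of $\Gamma^n$.
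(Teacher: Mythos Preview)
Your outline has the right ingredients --- the variational equation for $\xi=\frac{\partial x_\mu^s}{\partial\mu_j}$, a Liouville-type integration, and Lemma~\ref{Lemma2} to kill the boundary term at $t=+\infty$ --- and this is indeed how the paper proceeds. But you have mislocated both the rotation $M_1$ and the $\sin\theta_1 R_{1,j}$ term, and your Step~1 is doing work that is neither needed nor capable of producing those terms.

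First, Step~1 is unnecessary: the reparametrisation $x_\mu^s(0)=x_0^s(\mu)$ already absorbs $\tau^s(\mu)$, so $\frac{\partial n^s}{\partial\mu_j}(0,\mu_0)=\omega_0\,\xi(0,\mu_0)\land u_0$ directly, with no $\tau^s$ bookkeeping and no ``curvature of $\Sigma$'' entering. There are no boundary correction terms at $t=0$ to absorb into anything.

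Second, and this is the real gap, your Step~2 proposes to track $v(t)\land u_0$ with the \emph{fixed} vector $u_0$, but that scalar does not satisfy a closed Liouville ODE: Liouville's identity needs the wedge with a vector carried by the homogeneous variational flow. The paper's device is to track instead $\rho(t)=\xi(t)\land M_1X_1(x_\mu^s(t),\mu)$, where $M_1$ is the \emph{constant} rotation by the angle $\theta_1$. The point is that at $t=0$ one has $M_1X_1(x_0,\mu_0)=\|X_1(x_0,\mu_0)\|\,u_0$, so $\rho(0,\mu_0)/\|X_1(x_0,\mu_0)\|$ is exactly the quantity you want; this is the origin of the normalisation $1/\|X_1\|$, not the transversality factor $\langle\nabla h,X_1\rangle$. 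Differentiating $\rho$ along the orbit then yields
\[
\dot\rho=\mathrm{Div}\,X_1\cdot\rho-(M_1X_1)\land\frac{\partial X_1}{\partial\mu_j}+\sin\theta_1\,R_{1,j},
\]
so the $\sin\theta_1 R_{1,j}$ term arises \emph{inside the ODE for $\rho$}, from the fact that the rotated vector $M_1X_1$ is not tangent to the orbit when $\theta_1\neq0$, and not from any boundary correction at $t=0$ as you suggest. Once this scalar ODE is in hand, integrating from $0$ to $+\infty$ with $\rho(+\infty)=0$ gives $\rho(0,\mu_0)=I_j^+$ immediately.
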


\begin{proof} From now on in this proof we will denote $X_1$ some $C^\infty$-extension of $X_1$ at some neighborhood of $\overline{A_1}$ and thus $x_\mu^s(t)$ is well define for $|t|$ small enough. Let $j\in\{1,\dots,r\}$. Defining,
	\[\xi(t,\mu)=\frac{\partial x_\mu^s}{\partial \mu_j}(t),\]
it then follows that,
\begin{equation}\label{17}
	\dot \xi(t,\mu) = \dfrac{\partial \dot x_\mu^s}{\partial \mu_j}(t) = \dfrac{\partial }{\partial \mu_j}\left(X_1(x_\mu^s(t),\mu)\right) = DX_1(x_\mu^s(t),\mu)\xi(t,\mu)+\dfrac{\partial X_1}{\partial \mu_j}(x_\mu^s(t),\mu).
\end{equation}
Let $(s,n)=(s(t,\mu),n(t,\mu))$ be the coordinate system with origin at $x_\mu^s(t)$ and such that the angle between $X_1(x_\mu^s(t),\mu)$ and $s$ equals $\theta_1$, and $n$ is orthogonal to $s$, pointing outwards in relation to $G$. See Figure~\ref{Fig7}.
	\begin{figure}[ht]
		\begin{center}
			\begin{overpic}[height=5cm]{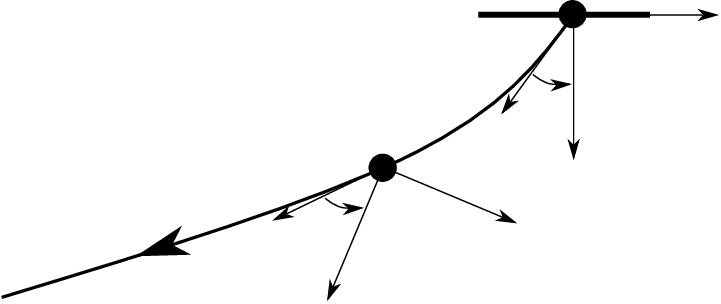}
				\put(98,37.5){$n$}
				\put(70,9){$n$}
				\put(80.5,20){$s$}
				\put(47,1){$s$}
				\put(74,28){$\theta_1$}
				\put(45,11){$\theta_1$}
				\put(67,41){$\Sigma$}
			\end{overpic}
		\end{center}
		\caption{Illustration of $(s,n)$ along $x^s_\mu(t)$.}\label{Fig7}
	\end{figure}
Write $\xi(t,\mu)=\xi_s(t,\mu)s+\xi_n(t,\mu)n$ in function of this new coodinate system and observe that $\xi_n$ (i.e. the component of $\xi$ in the direction of $n$) is given by
	\[\xi_n(t,\mu)=\omega_0\frac{\rho(t,\mu)}{||X_1(x_\mu^s(t,\mu)||},\]
where $\rho(t,\mu)=\xi\land M_1X_1(x_\mu^s(t),\mu)$. Since $n(0,\mu)$ is precisely equal to the normal direction of $u_0$, it follows that,
\begin{equation}\label{5}
	\frac{\partial n^s}{\partial \mu_j}(0,\mu)=\omega_0\frac{\rho(0,\mu)}{||X_1(x_\mu^s(0),\mu)||}.
\end{equation}
Denoting $M_1X_1=(P_0,Q_0)$, $\xi=(\xi_1,\xi_2)$ and, 
\begin{equation}\label{8}
	\rho(t,\mu)=\xi\land M_1X_1(x_\mu^s(t),\mu),
\end{equation}
we conclude that,
	\[\rho=\xi_1 Q_0-P_0\xi_2,\]
where, 
	\[P_0=P_1\cos\theta_1-Q_1\sin\theta_1, \quad Q_0=Q_1\cos\theta_1+P_1\sin\theta_1.\]
Hence,
\begin{equation}\label{10}
	\dot \rho=\dot\xi_1 Q_0-\dot\xi_2 P_0+\xi_1 \dot Q_0-\xi_2 \dot P_0.
\end{equation}
Knowing that,
	\[\dot P_1=\ddot x_1 = \dfrac{\partial P_1}{\partial x_1}P_1+\dfrac{\partial P_1}{\partial x_2}Q_1, \quad \dot Q_1=\ddot x_2 = \dfrac{\partial Q_1}{\partial x_1}P_1+\dfrac{\partial Q_1}{\partial x_2}Q_1,\]
we conclude,
\begin{equation}\label{6}
	\begin{array}{l}
		\dot P_0=\dfrac{\partial P_1}{\partial x_1}P_1\cos\theta_1+\dfrac{\partial P_1}{\partial x_2}Q_1\cos\theta_1-\dfrac{\partial Q_1}{\partial x_1}P_1\sin\theta_1-\dfrac{\partial Q_1}{\partial x_2}Q_1\sin\theta_1, \vspace{0.2cm} \\
		\dot Q_0= \dfrac{\partial Q_1}{\partial x_1}P_1\cos\theta_1+\dfrac{\partial Q_1}{\partial x_2}Q_1\cos\theta_1+\dfrac{\partial P_1}{\partial x_1}P_1\sin\theta_1+\dfrac{\partial P_1}{\partial x_2}Q_1\sin\theta_1.
	\end{array}
\end{equation}
Replacing \eqref{17} and \eqref{6} in \eqref{10} one can conclude, 
\begin{equation}\label{7}
	\dot \rho=\textnormal{div}X_1\rho-M_1X_1\land\dfrac{\partial X_1}{\partial \mu_j}+\sin\theta_1 R_{1,j}.
\end{equation}
Solving \eqref{7} we obtain,
\begin{equation}\label{9}
	\left.\rho(t,\mu)e^{D_1(t)}\right|_{t_0}^{t_1} = \int_{t_0}^{t_1}e^{D_1(t)}\left[\sin\theta_1R_{1,j}(x^s_\mu,\mu)-M_1X_1\land\dfrac{\partial X_1}{\partial \mu_j}(x_\mu^s(t),\mu)\right]dt.
\end{equation}
Observe that $X_1(x_\mu^s(t),\mu)\to0$ as $t\to+\infty$. Therefore, it follows from \eqref{5} and \eqref{8} that $\rho(t,\mu)\to0$ as $t\to+\infty$ (since from lemma~\ref{Lemma2} we know that $\frac{\partial n^s}{\partial \mu_j}$ is bounded). Thus, if we take $t_0=0$ and let $t_1\to+\infty$ in \eqref{9}, then it follows that,
	\[\rho(0,\mu) = \int_{0}^{+\infty}e^{D_1(t)}\left[M_1X_1\land\dfrac{\partial X_1}{\partial \mu_j}(x_\mu^s(t),\mu)-\sin\theta_1R_{1,j}(t,\mu)\right]dt,\]
and thus it follows from \eqref{5} and \eqref{8} we have that,
	\[\frac{\partial n^s}{\partial \mu_j}(0,\mu_0)=\frac{\omega_0}{||X_1(x_0,\mu_0)||} I_j^+.\] \end{proof}

\begin{remark}
	Observe that even if $L_0$ intersects $\Sigma$ in multiple points, $L_0^+$ was defined in such a way that there is no discontinuities on it. Moreover, if $L_0\cap \Sigma=\{x_0\}$, then $L_0^-=\{\gamma_0(t):t<0\}$ also has no discontinuities and thus, as in Proposition~\ref{MainProp1}, one can prove that,
		\[\frac{\partial n^u}{\partial \mu_j}(t,\mu)=\omega_0\frac{\overline{\rho}(t,\mu)}{||X_2(x_\mu^u(t),\mu)||},\]
	with $\overline{\rho}$ satisfying \eqref{7}, but with $x_\mu^u$ instead of $x_\mu^s$ and $X_2$ instead of $X_1$. Furthermore we have $\overline{\rho}(t,\mu)\to0$ as $t\to-\infty$ and thus by setting $t_1=0$ and letting $t_0\to-\infty$ we obtain,
		\[\overline{\rho}(0,\mu) = -\int_{0}^{+\infty}e^{D_2(t)}\left[M_2X_2\land\dfrac{\partial X_2}{\partial \mu_j}(x_\mu^u(t),\mu)-\sin\theta_2R_{2,j}(t,\mu)\right]dt.\] 
	Hence, in the simple case where $L_0$ intersects $\Sigma$ in an unique point $x_0$, it follows from \eqref{4} and from Proposition~\ref{MainProp1} that,
		\[\dfrac{\partial d}{\partial \mu_j}(\mu_0)=-\omega_0\left(\frac{1}{||X_2(x_0,\mu_0)||}I_j^-+\frac{1}{||X_1(x_0,\mu_0)||}I_j^+\right),\]
	with,
	\[\begin{array}{rl}
		I_j^+&=\displaystyle\int_{L_0^+}e^{D_1(t)}\left[(M_1X_1)\land\dfrac{\partial X_1}{\partial \mu_j}(\gamma_0(t),\mu_0)-\sin\theta_1R_{1,j}(\gamma_0(t),\mu_0)\right]dt, \vspace{0.2cm} \\ 
		I_j^-&=\displaystyle\int_{L_0^-}e^{D_2(t)}\left[(M_2X_2)\land\dfrac{\partial X_2}{\partial \mu_j}(\gamma_0(t),\mu_0)-\sin\theta_2R_{2,j}(\gamma_0(t),\mu_0)\right]dt.
	\end{array}\]
\end{remark}

\begin{remark}
	If instead of a non-smooth vector field we suppose that $Z=X$ is smooth, then we can assume $X_1=X_2$ and take $u_0=\frac{X(x_0,\mu_0)}{||X(x_0,\mu_0)||}$. In this case we would have $\theta_1=\theta_2=0$ and therefore conclude that,
		\[\frac{\partial d}{\partial \mu_j}(\mu_0)=-\frac{\omega_0}{||X(x_0,\mu_0)||}\int_{-\infty}^{+\infty}e^{-\int_{0}^{t}\textnormal{div}X(\gamma_0(s),\mu_0)ds}\left[X\land\frac{\partial X}{\partial \mu_j}(\gamma_0(t),\mu_0)\right]dt,\]
	as in the works of Perko, Holmes and Guckenheimer \cites{PerkoHeteroclinic,HolmesHeteroclinic,GuckHolmes}.
\end{remark}

\begin{remark}
	Within this section, the hypothesis of a polycycle is not necessary. In fact, if we assume only a heteroclinic connection between saddles, then we
	can define the displacement function as
	\[d(\mu)=[x_0^u(\mu)-x_0^s(\mu)]\land u_0,\]
	and therefore obtain,
	\[\dfrac{\partial d}{\partial \mu_j}(\mu_0)=-\left(\frac{1}{||X_2(x_0,\mu_0)||}I_j^-+\frac{1}{||X_1(x_0,\mu_0)||}I_j^+\right).\]
	It is only necessary to pay attention at which direction we have $d(\mu)>0$ or $d(\mu)<0$. 
\end{remark}

Let us now study the case where at least one of the endpoints of the heteroclinic connection is a tangential singularity. In the case of the hyperbolic saddle, we use the Center-Stable Manifold Theorem \cite{Kel1967} to take a point $x_1^s(\mu)$ within the stable manifold of the hyperbolic saddle. Then, we define,
\begin{equation}\label{19}
	x^s(t,\mu)=\phi_1(t,x_1^s(\mu),\mu),
\end{equation}
where $\phi_1$ is the flow of $X_1$, the component of $Z$ which contains the hyperbolic saddle. Then, we use the Implicit Function Theorem at lemma~\ref{Lemma1} to obtain a smooth function $\tau^s(\mu)$ such that,
\begin{equation}\label{21}
	x_0^s(\mu)=x^s(\tau^s(\mu),\mu)\in\Sigma,
\end{equation}
for every $\mu\in\Lambda$, where $\Lambda$ is a small enough neighborhood of $\mu_0$. Therefore, in the case of the tangential singularity, we define
\begin{equation}\label{22}
	x_1^s(\mu)=T^s(h_\mu(0),\mu),
\end{equation}
where $T^s$ and $h_\mu$ are given by \eqref{26}. However, in the case of a hyperbolic saddle the definition of $x^s(t,\mu)$ given by \eqref{19} works essentially because the hyperbolic saddle is \emph{structurally stable}. But this may not be the case of a tangential singularity. Moreover, the parameters at \eqref{26}, related with the transitions maps near tangential singularities, depend \emph{continuously} on the parameter $\mu$. Hence, we cannot take its derivative with respect to $\mu$. To avoid these problems, it is sufficient to assume that $Z$ is constant in a neighborhood of the tangential singularity.

\begin{remark}\label{StrongHypothesis}
	From now on, given a tangential singularity $p$ of the planar non-smooth vector field $Z$, we suppose that $Z$ is constant in a neighborhood $B$ of $p$.
\end{remark}

In this case, let $x_1^s(\mu)\in B$ be given by \eqref{22}. It follows from Remark~\ref{StrongHypothesis} that $x_1^s(\mu)=x_1^s$ is constant. Now, similarly to the case of the hyperbolic saddle, let $x^s(t,\mu)$ be given by \eqref{19} (i.e. $x^s(t,\mu)$ is the parametrization of the regular orbit $L_0^s(\mu)$). Let $x_0=x^s(t_0^s,\mu_0)$ be the intersection of $L_0(\mu_0)$ and $\Sigma$. See Figure~\ref{Fig12}. 
\begin{figure}[ht]
	\begin{center}
		\begin{overpic}[height=5cm]{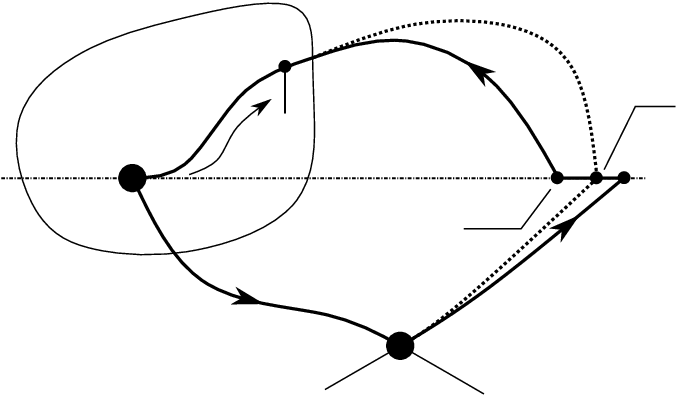} 
			\put(7,50){$B$}
			\put(18,36){$p_1$}
			\put(35,35){$T^s$}
			\put(40,51){$x_1$}
			\put(83,52){$L_0$}
			\put(65,38){$L_0^s(\mu)$}
			\put(52,34){$\Sigma$}
			\put(101,41.5){$x_0$}
			\put(57,23.5){$x_0^s(\mu)$}
		\end{overpic}
	\end{center}
	\caption{Illustration of $x_0^s(\mu)$ and $x_0^u(\mu)$ in the case of a tangential singularity.}\label{Fig12}
\end{figure}
Similarly to lemma~\ref{Lemma1}, it follows from the Implicit Function Theorem that there exist a $C^\infty$-function $\tau^s(\mu)$ such that $\tau^s(\mu)\to t_0^s$, as $\mu\to\mu_0$, and such that $x_0^s(\mu)=x^s(\tau^s(\mu),\mu)\in\Sigma$, for all $\mu\in\Lambda$. Similarly, one can define $x_1^u(\mu)$ and then obtain $\tau^u(\mu)$ and $x_0^u(\mu)$. Therefore, we have obtained an analogous version of lemma~\ref{Lemma1} for the case of tangential singularities. Since $Z$ is constant in a neighborhood of $p_1$, it follows that the partial derivatives of $x^s$ in relation to $\mu$ are zero near $p_1$. Hence, lemma~\ref{Lemma2} is also clear in this case. Although the assumption made at Remark~\ref{StrongHypothesis} is strong, we observe that to prove Theorem~\ref{Main4}, we will make use of \emph{bump-functions} to construct perturbations that does not affect any tangential singularity. We now prove the similar version of Proposition~\ref{MainProp1}.

\begin{proposition}\label{MainProp2}
	Let $p_1$ be a tangential singularity satisfying Remark~\ref{StrongHypothesis}. Then for any $j\in\{1,\dots,r\}$ it follows that,
		\[\frac{\partial n^s}{\partial \mu_j}(0,\mu_0)=\frac{\omega_0}{||X_1(x_0,\mu_0)||}(I_j^+ + H_j^+),\]
	where $I_j^+$ is given by \eqref{24}.
		\[H_j^+=e^{D_1(t_1)}\frac{\partial \gamma_0}{\partial \mu_j}(t_1)\land M_1 X_1(p_1,\mu_0),\]
	and $\gamma_0$ is a parametrization of $L_0^+$ such that $\gamma_0(0)=x_0$ and $\gamma_0(t_1)=p_1$.
\end{proposition}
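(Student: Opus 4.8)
The plan is to run the proof of Proposition~\ref{MainProp1} essentially word for word, the only change being that a hyperbolic saddle is replaced by a tangential singularity. Recall that in that proof one sets $\xi(t,\mu)=\frac{\partial x_\mu^s}{\partial \mu_j}(t)$ and $\rho(t,\mu)=\xi\land M_1X_1(x_\mu^s(t),\mu)$, derives the linear scalar equation \eqref{7} for $\rho$, integrates it to get \eqref{9}, and then takes $t_0=0$ and lets the upper limit tend to $+\infty$; the boundary term at $+\infty$ disappears \emph{because $p_1$ is a hyperbolic saddle}, so $X_1(x_\mu^s(t),\mu)\to0$ and $\rho(t,\mu)\to0$ as $t\to+\infty$ (the latter using the boundedness provided by Lemma~\ref{Lemma2}). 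When $p_1$ is a tangential singularity this final passage to the limit is the step that must be modified, and the boundary term that survives is precisely $H_j^+$.

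First I would record the one structural change. By the definition of a tangential singularity, $X_1(p_1,\mu_0)\neq0$, so $p_1$ is a regular point of the component $X_1$; fixing a $C^\infty$ extension of $X_1$ to a neighbourhood of $\overline{A_1}$ (which contains $p_1$), exactly as in the proof of Proposition~\ref{MainProp1}, the arc $L_0^+$ reaches $p_1$ in finite time, i.e. $\gamma_0(t_1)=p_1$ for some $t_1>0$, $L_0^+=\gamma_0\big((0,t_1)\big)$, and $(t,\mu)\mapsto x_\mu^s(t)$ is $C^\infty$ on a neighbourhood of $[0,t_1]\times\{\mu_0\}$. Along this arc nothing else changes: the variational equation for $\xi$, the moving frame $(s,n)$ behind \eqref{5} (which is now unproblematic near $p_1$, since $||X_1(x_{\mu_0}^s(t),\mu_0)||=||X_1(\gamma_0(t),\mu_0)||$ is bounded away from zero on the compact interval $[0,t_1]$, whereas near a saddle it degenerates), and hence \eqref{7} and \eqref{9}, all hold verbatim on $[0,t_1]$. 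Here $\frac{\partial \gamma_0}{\partial \mu_j}(t_1)$ is to be read as $\xi(t_1,\mu_0)=\frac{\partial}{\partial \mu_j}\big|_{\mu=\mu_0}x_\mu^s(t_1)$.

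Next I would evaluate \eqref{9} with $t_0=0$ and upper limit $t_1$, at $\mu=\mu_0$. Since $D_1(0)=0$, this gives
\[
\rho(0,\mu_0)=\rho(t_1,\mu_0)\,e^{D_1(t_1)}+\int_{0}^{t_1}e^{D_1(t)}\left[(M_1X_1)\land\frac{\partial X_1}{\partial \mu_j}(\gamma_0(t),\mu_0)-\sin\theta_1\,R_{1,j}(\gamma_0(t),\mu_0)\right]dt .
\]
Because $L_0^+=\gamma_0\big((0,t_1)\big)$, the integral on the right is exactly $I_j^+$. For the boundary term, $\rho(t_1,\mu_0)=\xi(t_1,\mu_0)\land M_1X_1(x_{\mu_0}^s(t_1),\mu_0)=\frac{\partial \gamma_0}{\partial \mu_j}(t_1)\land M_1X_1(p_1,\mu_0)$, so that $\rho(t_1,\mu_0)e^{D_1(t_1)}=H_j^+$; hence $\rho(0,\mu_0)=I_j^++H_j^+$. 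Substituting this into \eqref{5} at $t=0$, $\mu=\mu_0$ (which reads $\frac{\partial n^s}{\partial \mu_j}(0,\mu_0)=\omega_0\,\rho(0,\mu_0)/||X_1(x_0,\mu_0)||$) yields the claimed identity.

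The only genuine obstacle is the bookkeeping near $p_1$: one must check that passing from a hyperbolic saddle to a tangential singularity leaves intact the moving-frame construction leading to \eqref{5} and that the arrival time $t_1$ is finite, both of which follow from $X_1(p_1,\mu_0)\neq0$ together with the assumption of Remark~\ref{StrongHypothesis}. Note that, unlike in Proposition~\ref{MainProp1}, Lemma~\ref{Lemma2} plays no role here, since the interval $[0,t_1]$ is finite; the whole additional content of the statement is the boundary term $H_j^+$ produced by this finite arrival time. The corresponding statement for $n^u$, when the $\alpha$-endpoint $p_2$ is a tangential singularity, would follow in the same way, the extra term now appearing at the finite entry time along $L_0^-$.
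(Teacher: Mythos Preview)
Your proof is correct and follows essentially the same route as the paper: both reuse the differential identity \eqref{9} from the proof of Proposition~\ref{MainProp1}, take $t_0=0$ and the finite upper limit $t_1$ with $\gamma_0(t_1)=p_1$, and identify the surviving boundary term $e^{D_1(t_1)}\rho(t_1,\mu_0)$ with $H_j^+$. Your write-up is more explicit about why the finite arrival time and the moving frame are unproblematic (namely $X_1(p_1,\mu_0)\neq0$), and correctly notes that Lemma~\ref{Lemma2} is not needed here, but the underlying argument is the same.
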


\begin{proof} Let $\gamma_0$ be a parametrization of $L_0$ such that $\gamma_0(0)=x_0$ and $\gamma_0(t_1)=p_1$. It follows from Proposition~\ref{MainProp1} that,
	\[\frac{\partial n^s}{\partial \mu_j}(0,\mu)=\omega_0\frac{\rho(0,\mu)}{||X_1(x_\mu^s(0),\mu)||},\]
with,
	\[\rho(t,\mu)=\xi\land M_1X_1(x_\mu^s(t),\mu),\]
satisfying,
\begin{equation}\label{11}
	\left.\rho(t,\mu)e^{D_1(t)}\right|_{t_0}^{t_1} = \int_{t_0}^{t_1}e^{D_1(t)}\left[\sin\theta_1R_{1,j}(x^s_\mu,\mu)-M_1X_1\land\dfrac{\partial X_1}{\partial \mu_j}(x_\mu^s(t),\mu)\right]dt.
\end{equation}
Observe that $t_1>0$ and thus we can define $L_0^+=\{\gamma_0(t):0<t<t_1\}$. Then, it follows from \eqref{11} that,
	\[\rho(0,\mu_0)=I_j^++e^{D_1(t_1)}\rho(t_1,\mu_0).\]
Since $Z$ is constant in a neighborhood of $p_1$, it follows that $\xi(t_1,\mu)=0$ and thus $\rho(t_1,\mu_0)=0$. This finishes the prove. \end{proof}

In the following proposition we use the Poincar\'e-Bendixson theory for non-smooth vector fields (see Buzzi et al \cite{PB}) and the displacement maps to prove, under some conditions, the bifurcation of limit cycles. Such result will be used in a induction argumentation in the proof of Theorem~\ref{Main4}. 

\begin{proposition}\label{Prop5}
	Let $Z$ and $\Gamma^n$ be as in Section~\ref{MR}, with the tangential singularities satisfying Remark~\ref{StrongHypothesis}, and $d_i:\Lambda\to\mathbb{R}$, $i\in\{1,\dots,n\}$, be the displacement maps defined at the regular orbits of $\Gamma^n$. Let $\sigma_0\in\{-1,1\}$ be a constant such that $\sigma_0=1$ (resp. $\sigma_0=-1$) if the Poincar\'e map is defined in the bounded (resp. unbounded) region of $\Gamma^n$. Then following statements holds.
	\begin{enumerate}[label=(\alph*)]
		\item If $r(\Gamma^n)>1$ and $\mu\in\Lambda$ is such that $\sigma_0d_1(\mu)\leqslant0,\dots, \sigma_0d_n(\mu)\leqslant0$ with $\sigma_0d_i(\mu)<0$ for some $i\in\{1,\dots,n\}$, then at least one stable limit cycle $\Gamma$ bifurcates from $\Gamma^n$.
		\item If $r(\Gamma^n)<1$ and $\mu\in\Lambda$ is such that $\sigma_0d_1(\mu)\geqslant0,\dots,\sigma_0d_n(\mu)\geqslant0$ with $\sigma_0d_i(\mu)>0$ for some $i\in\{1,\dots,n\}$, then at least one unstable limit cycle $\Gamma$ bifurcates from $\Gamma^n$.
	\end{enumerate} 
\end{proposition}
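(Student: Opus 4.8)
Reversing time turns $Z$ into $-Z$, turns $p(\Gamma^n)>1$ into $p(\Gamma^n)<1$, turns the sign of each $d_i(\mu)$ into its opposite, and turns stable cycles into unstable ones; hence (b) follows from (a), and I only sketch (a). The plan is: for each $\mu\in\Lambda$ satisfying the hypotheses of (a), construct a compact \emph{annular} region $R=R(\mu)$, close to $\Gamma^n$, that is positively invariant for the flow of $Z_\mu$ and contains no singularity of $Z_\mu$, and then apply the non-smooth Poincar\'e--Bendixson theorem \cite{PB}: since $R$ carries no singularity, the $\omega$-limit of every point of $R$ is a periodic orbit; letting $\Gamma_*$ be the periodic orbit in $R$ nearest to the inner boundary of $R$ (more precisely, the one bounding, on the side of that inner boundary, the closed nonempty union of all periodic orbits in $R$), every orbit issuing from the inner boundary of $R$ has $\Gamma_*$ as its $\omega$-limit, so $\Gamma_*$ is a stable limit cycle, and since $R(\mu)$ can be made to shrink to $\Gamma^n$ as $\mu\to\mu_0$, $\Gamma_*$ bifurcates from $\Gamma^n$.

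The \emph{inner} boundary of $R$ encodes the attracting character of $\Gamma^n$. By the computation in the proof of Theorem~\ref{Main1}, the first return map of $Z_{\mu_0}$ on a transversal inside $A$ is conjugate to a map of the form $\pi(x)=Kx^{p(\Gamma^n)}+O(x^{p(\Gamma^n)+1})$ with $K>0$; as $p(\Gamma^n)>1$ this yields, by the usual transversal--section argument, a closed curve $C\subset A$ arbitrarily close to $\Gamma^n$ across which the flow of $Z_{\mu_0}$ is transverse and points toward $\Gamma^n$. I would then check that this persists throughout $\Lambda$: the local transition maps near the $p_i$ retain the forms of Section~\ref{sec2} uniformly in $\mu$ (Mourtada's theorem for hyperbolic saddles, the maps $T^{s,u}$ of Section~\ref{sec2.3} for tangential singularities), the regular transitions depend continuously on $\mu$, and the exponent $p(\Gamma^n)>1$ does not move; hence, after shrinking $\Lambda$, the flow of $Z_\mu$ still crosses $C$ transversally toward $\Gamma^n$ for all $\mu\in\Lambda$.

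The \emph{outer} boundary of $R$ encodes the sign condition. For $\mu\in\Lambda$ each connection $L_i$ breaks into perturbed separatrices $L_i^s(\mu)$ and $L_i^u(\mu)$, given by Lemma~\ref{Lemma1} at hyperbolic saddles and by Remark~\ref{StrongHypothesis} at tangential singularities, with $d_i(\mu)$ measuring the signed gap between them. The geometric fact to establish is that $\sigma_0 d_i(\mu)\leqslant 0$ holds precisely when $L_i^u(\mu)$ lies on the side of $L_i^s(\mu)$ that faces the ring $A$; granting this, one assembles a closed curve $\Gamma^n_\mu$, close to $\Gamma^n$, out of arcs of the curves $L_i^u(\mu)$, short arcs on $\Sigma$ bridging the gaps near the points of $L_i\cap\Sigma$ and near each tangential $p_i$, and, near each singularity $p_i$, a small arc cutting the corner on the $A$-side so that $p_i$ lies outside the region bounded by $\Gamma^n_\mu$; this can be arranged so that the flow of $Z_\mu$ crosses $\Gamma^n_\mu$ transversally toward $C$, strictly so along at least one bridging arc because $\sigma_0 d_i(\mu)<0$ for some $i$ (so $\Gamma^n_\mu$ is not invariant). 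Taking $R=R(\mu)$ to be the closed annular region bounded by $\Gamma^n_\mu$ and $C$, it lies in a neighbourhood of $\Gamma^n$, contains no singularity of $Z_\mu$ (the $p_i$ are excluded and $\Lambda$ is small), meets $\Sigma$ only in crossing arcs, and is positively invariant; the non-smooth Poincar\'e--Bendixson theorem then finishes the proof as described.

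The step I expect to be the main obstacle is precisely the construction of $\Gamma^n_\mu$ with the flow inward--transverse everywhere along it, and in particular the identification of the geometric condition ``$L_i^u(\mu)$ on the $A$-side of $L_i^s(\mu)$'' with $\sigma_0 d_i(\mu)\leqslant 0$: this forces one to reconcile the sign of the displacement function — defined through the wedge product with $u_0$ and the orientation sign $\omega_0$ — with the side on which the broken connection splits, uniformly over both orientations of $\Gamma^n$ and over the two possible positions of the ring $A$ encoded by $\sigma_0$. Near a hyperbolic saddle this is elementary, but near a tangential singularity one must use the local normal form of Section~\ref{sec2.3} together with the stable/unstable contact-order bookkeeping already used for Theorem~\ref{Main1} to see that the corner-cutting arc and the bridging segment on $\Sigma$ can be taken with the flow crossing them in the required direction, and to ensure that $R$ meets $\Sigma$ only in crossing arcs so that the non-smooth Poincar\'e--Bendixson theorem is applicable.
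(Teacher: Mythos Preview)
Your overall strategy---build a positively invariant annular trap near $\Gamma^n$, apply non-smooth Poincar\'e--Bendixson, and deduce (b) from (a) by time reversal---is exactly the paper's. The difference is in execution, and the paper's is considerably lighter.

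Rather than constructing an explicit outer boundary $\Gamma^n_\mu$ with corner-cutting arcs near each singularity, the paper observes that the sign hypothesis $\sigma_0 d_i(\mu)\leqslant 0$ for all $i$ is precisely what makes the first return map $P(\cdot,\mu)$ on a single transversal $l_1$ \emph{well defined}: each broken connection falls on the side where the next transition can pick it up (this is the content of Figure~\ref{Fig9}). With $P$ in hand, the ``outer'' part of the trap is free: the arc of orbit from a fixed point $q_0\in l_1$ to $P(q_0,\mu)$, together with pieces of the perturbed separatrices, bounds the invariant region. The ``inner'' part is just the persistence of the single inequality $P(q_0,\mu_0)<q_0$ under small change of $\mu$, which is immediate from continuity. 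This sidesteps entirely the sign-reconciliation and corner-cutting issues you flag as the main obstacle.

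There is also a small gap in your Poincar\'e--Bendixson step. Knowing only that every $\omega$-limit in $R$ is periodic does not give an \emph{isolated} periodic orbit: the boundary of the set of periodic orbits could be a one-sided accumulation, yielding a cycle that is merely semi-stable or not isolated at all. The paper closes this by noting that $P(\cdot,\mu_0)$ is \emph{non-flat} (it has the explicit form $Kx^{p(\Gamma^n)}+\cdots$ from the proof of Theorem~\ref{Main1}), which forces isolated fixed points and hence a genuine stable limit cycle. You should invoke this as well.
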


\begin{proof} For the simplicity we will use the same polycycle $\Gamma$ used in the proof of Theorem~\ref{Main1}. Let $x_{i,0}\in L_i$ be as in Section~\ref{MR} and $l_i$ be transversal sections of $L_i$ through $x_{i,0}$, $i\in\{1,2,3\}$. Let $R_i:l_i\times\Lambda\to l_{i-1}$ be functions given by the compositions of the functions used in the proof of Theorem~\ref{Main1}, $i\in\{1,2,3\}$. See Figure~\ref{Fig9}. 
\begin{figure}[ht]
	\begin{center}
		\begin{minipage}{6cm}
			\begin{center}
				\begin{overpic}[width=5.5cm]{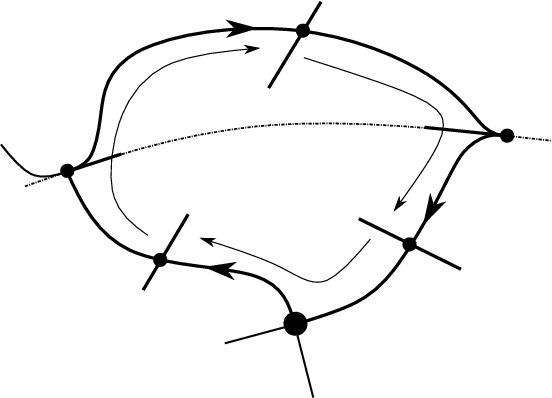} 
					\put(30,53){$R_1$}
					\put(65,40){$R_2$}
					\put(45,28){$R_3$}
				\end{overpic}
					
					$d_i(\mu)=0$.
			\end{center}
		\end{minipage}
		\begin{minipage}{6cm}
			\begin{center}
				\begin{overpic}[width=5.5cm]{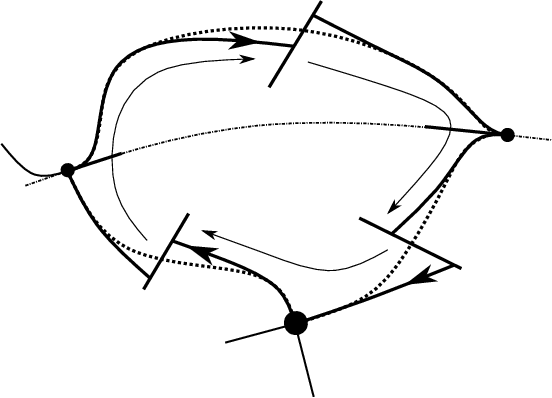} 
				\end{overpic}
					
					$d_i(\mu)<0$.
			\end{center}
		\end{minipage}
	\end{center}
\caption{Observe that if $d_i(\mu)>0$, then the composition may not be well defined.}\label{Fig9}
\end{figure}
Hence, if $d_i(\mu)\leqslant0$, $i\in\{1,2,3\}$, then the Poincar\'e map $P:l_1\times\Lambda\to l_1$ can be written as $P(x,\mu)=R_3(R_2(R_1(x,\mu),\mu),\mu)$. We observe that $P$ is $C^\infty$ in $x$, continuous in $\mu$, and it follows from the proof of Theorem~\ref{Main1} that $P(\cdot,\mu_0)$ is non-flat. It follows from Theorem~\ref{Main1} that there is an open ring $A$ in the bounded region delimited by $\Gamma$, such that the orbit $\Gamma$ through any point $q\in A_0$ spiral towards $\Gamma^3$ as $t\to+\infty$. Let $p$ be the interception of $\Gamma$ and $l_1$, $q_0\in A_0\cap l_1$, $\xi$ a coordinate system along $l_1$ such that $\xi=0$ at $p$ and $\xi>0$ at $q_0$ and let we identify this coordinate system with $\mathbb{R}_+$. Observe that $P(q_0,\mu_0)<q_0$ and thus by continuity $P(q_0,\mu)<q_0$ for any $\mu\in\Lambda$. See Figure~\ref{Fig10}.
\begin{figure}[ht]
	\begin{center}
		\begin{overpic}[width=8cm]{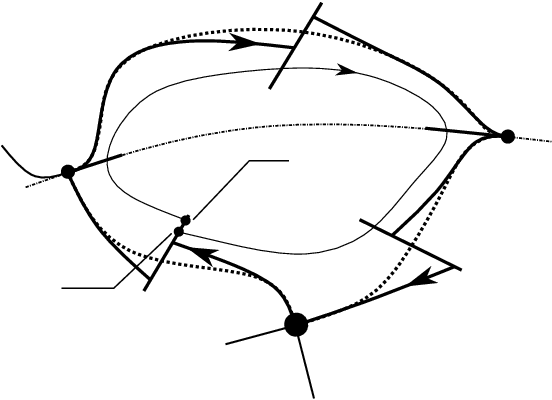} 
			\put(53,42){$q_0$}
			\put(-7,19){$P(q_0,\mu)$}
		\end{overpic}
	\end{center}
	\caption{Illustration of the application of the Poincar\'e-Bendixson theory.}\label{Fig10}
\end{figure}
Therefore, it follows from the Poincar\'e-Bendixson theory and from the non-flatness of $P$ that at least one stable limit cycle $\Gamma_0$ bifurcates from $\Gamma^3$. Statement $(b)$ can be prove by time reversing. \end{proof}

\section{The further displacement map}\label{sec5}

Let $Z$ and $\Gamma^n$ be as in Section~\ref{MR}, with the tangential singularities satisfying Remark~\ref{StrongHypothesis}. Let $L_i^u(\mu)$ and $L_i^s(\mu)$ be the perturbations of $L_i$ such that $\alpha(L_i^u)=p_{i+1}$ and $\omega(L_i^s)=p_i$, $i\in\{1,\dots,n\}$, with each index being modulo $n$. Following the work of Han et al \cite{HanWuBi}, let $C_i=x_{i,0}$. If $C_i\notin\Sigma$, then let $v_i$ be the unique unitary vector orthogonal to $Z(C_i,\mu_0)$ and pointing outwards in relation to $\Gamma^n$. See Figure~\ref{FigExtra1}. 
\begin{figure}[ht]
	\begin{center}
		\begin{overpic}[width=8cm]{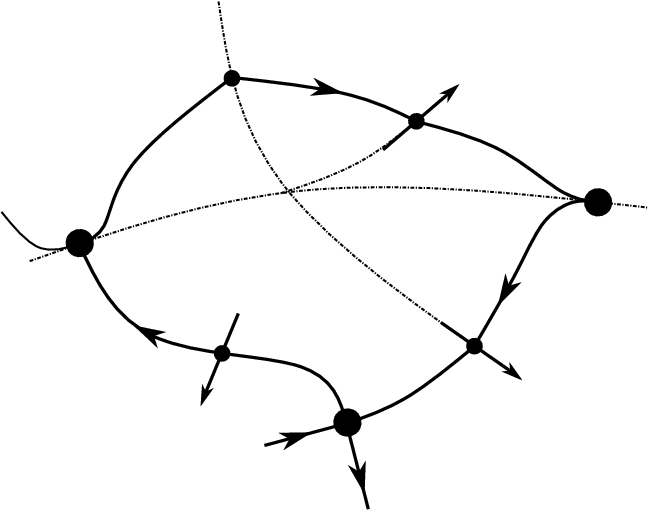} 
			\put(91,51){$p_1$}
			\put(10,45){$p_2$}
			\put(56,11){$p_3$}
			\put(62,54){$C_1$}
			\put(26,68){$x_{1,1}$}
			\put(37,26){$C_2$}
			\put(65,24){$C_3$}
			\put(42,44){$\Sigma$}
			\put(72,64){$l_1$}
			\put(34,15){$l_2$}
			\put(80,22){$l_3$}
		\end{overpic}
	\end{center}
	\caption{An example of the construction of the points $C_i$ and the lines $l_i$.}\label{FigExtra1}
\end{figure}
On the other hand, if $C_i\in\Sigma$, then let $v_i$ be the unique unitary vector tangent to $T_{C_i}\Sigma$ and pointing outwards in relation to $\Gamma^n$. In both cases, let $l_i$ be the transversal section normal to $L_i$ at $C_i$. It is clear that any point $B\in l_i$ can be written as $B=C_i+\lambda v_i$, with $\lambda\in\mathbb{R}$. Moreover, let $N_i$ be a small enough neighborhood of $C_i$ and $J_i=N_i\cap\Sigma$. It then follows that any point $B\in J_i$ can be orthogonally projected on the line $l_i:C_i+\lambda v_i$, $\lambda\in\mathbb{R}$, and thus it can be uniquely, and smoothly, identified with $C_i+\lambda_B v_i$, for some $\lambda_B\in\mathbb{R}$. In either case $C_i\in\Sigma$ or $C_i\not\in\Sigma$, observe that if $\lambda>0$, then $B$ is outside $\Gamma^n$ and if $\lambda<0$, then $B$ is inside $\Gamma^n$. For each $i\in\{1,\dots,n\}$ we define, 
\begin{equation}\label{12}
	B_i^u=L_i^u\cap l_i=C_i+b_i^u(\mu)v_i, \quad B_i^s=L_i^s\cap l_i=C_i+b_i^s(\mu)v_i.
\end{equation}
Therefore, it follows from Section~\ref{sec4} that,
	\[d_i(\mu)=b_i^u(\mu)-b_i^s(\mu),\]
$i\in\{1,\dots,n\}$. Let $r_i=\frac{|\nu_i(\mu_0)|}{\lambda_i(\mu_0)}$ if $p_i$ is a hyperbolic saddle or $r_i=\frac{n_{i,u}}{n_{i,s}}$ if $p_i$ is a tangential singularity, $i\in\{1,\dots,n\}$. If $r_i>1$ and $d_i(\mu)<0$, then following \cite{HanWuBi}, we observe that,
	\[B_{i-1}^*=L_i^u\cap l_{i-1}=C_{i-1}+b_{i-1}^*(\mu)v_{i-1},\]
is well defined and thus we define the \emph{further displacement map} as,
	\[d_{i-1}^*(\mu)=b_{i-1}^*(\mu)-b_{i-1}^s(\mu).\]
See Figure~\ref{Fig11}. 
\begin{figure}[ht]
	\begin{center}
		\begin{minipage}{6cm}
			\begin{center}
				\begin{overpic}[width=5.5cm]{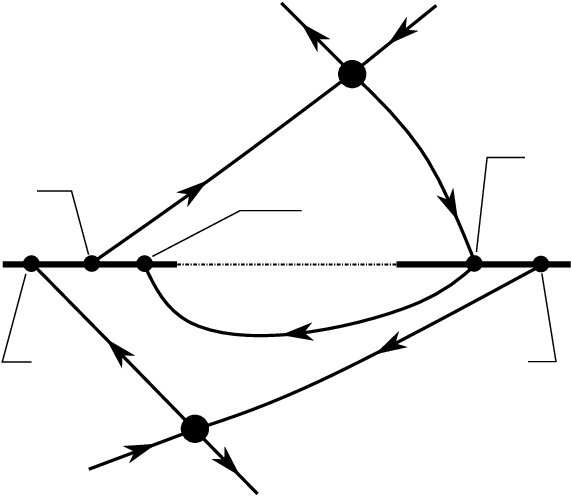}
					\put(53,47){$B_1^*$}
					\put(-2,51){$B_1^s$}
					\put(5,21){$B_1^u$}
					\put(92,57){$B_2^u$}
					\put(84,21){$B_2^s$}
					\put(59.5,79){$p_1$}
					\put(30,5){$p_2$}
				\end{overpic}
				
				$r_2>1 \text{ and } d_1^*<0.$
			\end{center}
		\end{minipage}
		\begin{minipage}{6cm}
			\begin{center}
				\begin{overpic}[width=5.5cm]{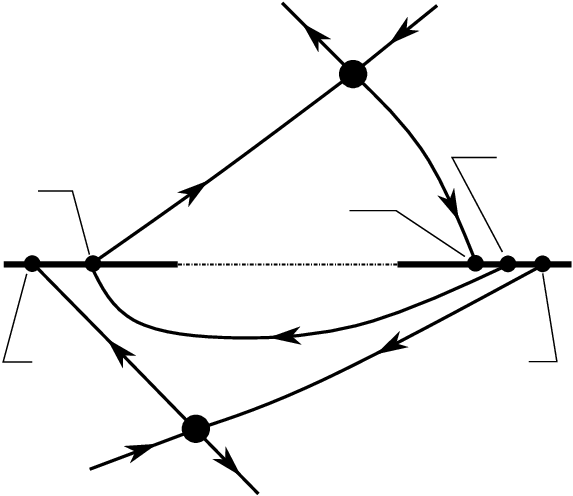} 
					\put(88,57){$B_2^*$}
					\put(-2,51){$B_1^s$}
					\put(5,21){$B_1^u$}
					\put(51,48){$B_2^u$}
					\put(84,21){$B_2^s$}
					\put(59.5,79){$p_1$}
					\put(30,5){$p_2$}
				\end{overpic}
				
				$r_2<1 \text{ and } d_1^*<0.$
			\end{center}
		\end{minipage}
	\end{center}
	\caption{Illustration of $d_1^*<0$ for both $r_2>1$ and $r_2<1$.}\label{Fig11}
\end{figure}
On the other hand, if $r_i<1$ and $d_{i-1}(\mu)>0$, then,
\[B_i^*=L_{i-1}^s\cap l_i=C_i+b_i^*(\mu)v_i,\]
is well defined and thus we define the \emph{further displacement map} as,
\[d_{i-1}^*(\mu)=b_i^u(\mu)-b_i^*(\mu).\] 

\begin{proposition}\label{Prop3}
	For $i\in\{1,\dots,n\}$ and $\Lambda\subset\mathbb{R}^r$ small enough we have,
	\[d_{i-1}^*(\mu)=\left\{\begin{array}{ll}
		d_{i-1}(\mu)+O(||\mu-\mu_0||^{r_i}) & \text{if } r_i>1, \vspace{0.2cm} \\
		d_i(\mu)+O(||\mu-\mu_0||^\frac{1}{r_i}) & \text{if } r_i<1.
	\end{array}\right.\]
\end{proposition}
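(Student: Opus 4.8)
The plan is to reduce the estimate to a local computation near the singularity $p_i$ using the normal form of the transition map established in Section~\ref{sec2}, and then to track how the points $B_{i-1}^*$, $B_i^*$ compare with $B_{i-1}^s$, $B_i^s$ through the Dulac-type map. Consider first the case $r_i>1$. By definition, $B_i^u = L_i^u\cap l_i$ enters the neighborhood of $p_i$ along a section transverse to the stable manifold, and $B_{i-1}^* = L_i^u\cap l_{i-1}$ is obtained by flowing $L_i^u$ through the saddle (or tangential singularity) $p_i$ and then along a regular transition to $l_{i-1}$. I would set up local coordinates near $p_i$ as in Section~\ref{sec2.1} (or Section~\ref{sec2.3} in the tangential case), so that the transition map $D$ has Mourtada's form $D(x,\mu)=x^{r_i(\mu)}(A(\mu)+\phi(x,\mu))$ with $\phi\in I$. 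The key observation is that $b_i^u(\mu)$ and $b_i^s(\mu)$ measure, on the section $l_i$, the positions where $L_i^u$ and $L_i^s$ cross; the distance between them is exactly $d_i(\mu)$, which is $O(\|\mu-\mu_0\|)$ since $d_i(\mu_0)=0$ and $d_i$ is $C^\infty$ (Definition~\ref{DefDisplacement1}). The point $B_{i-1}^*$ is the image under $D$ (composed with regular transitions, which are $C^\infty$ diffeomorphisms and hence Lipschitz) of the point on the incoming section whose ``height'' is controlled by $d_{i-1}(\mu)$; concretely one writes the incoming coordinate as a quantity of size $O(\|\mu-\mu_0\|)$ and applies $D$, picking up a factor $\bigl(O(\|\mu-\mu_0\|)\bigr)^{r_i}=O(\|\mu-\mu_0\|^{r_i})$.

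More precisely, the steps I would carry out are: (1) Express $d_{i-1}^*(\mu)$ as $b_{i-1}^*(\mu)-b_{i-1}^s(\mu)$ and split the path from $l_{i-1}$ around $p_i$ back to $l_{i-1}$ into a regular part and the singular transition near $p_i$. (2) Show that the contribution of $b_{i-1}^s$ and the incoming coordinate of $L_{i-1}^s$ differ from $d_{i-1}(\mu)$ only by terms that the regular diffeomorphisms map linearly, so that modulo a $C^\infty$ change this term is $d_{i-1}(\mu)$ up to higher order. (3) Show that the extra term $b_{i-1}^*(\mu)$ (the position where the continuation of $L_i^u$ lands on $l_{i-1}$) equals the image under the composed transition of a quantity of size $O(\|\mu-\mu_0\|)$ (since it is controlled by $d_i(\mu)$), and that passing this through $D$ of Mourtada's form produces a term of order $\|\mu-\mu_0\|^{r_i}$, because $D(x,\mu)=O(x^{r_i})$ and the $\phi\in I$ correction does not spoil this (the class $I$ functions vanish at $x=0$ uniformly in $\mu$). (4) Combine to get $d_{i-1}^*(\mu)=d_{i-1}(\mu)+O(\|\mu-\mu_0\|^{r_i})$. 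For the case $r_i<1$ one runs the symmetric argument with $D^{-1}$, which by the Proposition after Mourtada's theorem has the form $x^{1/r_i}(B(\mu)+\psi(x,\mu))\in\mathfrak D$, yielding the exponent $1/r_i$ and the displacement $d_i(\mu)$ instead.

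The main obstacle I anticipate is bookkeeping the identification of the abstract sections $l_{i-1}$, $l_i$ (and the orthogonal projections of $\Sigma$ onto the normal lines $C_i+\lambda v_i$, in case $C_i\in\Sigma$) with the local coordinates $(x,y)$ in which $D$ has Mourtada's form — one must verify that the change of coordinates $\Phi_i$ and the regular transition maps are genuinely $C^\infty$ and that the quantities $b_i^u-b_i^s$, $b_{i-1}^*-b_{i-1}^s$ really do translate into ``the argument fed to $D$ is $O(\|\mu-\mu_0\|)$'' with no hidden lower-order terms. The tangential case requires the extra care that $L_i^u$ and $L_i^s$ reach $p_i$ along possibly different components $A_u$, $A_s$ of $\mathbb R^2\setminus\Sigma$, so the relevant ``Dulac'' map is the composition $T^u\circ(T^s)^{-1}$ from Section~\ref{sec2.3}, whose leading exponent is $n_{i,u}/n_{i,s}=r_i$; one checks the remainder estimate exactly as in the smooth saddle case. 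Uniformity of the $O$-terms in $\mu$ on a sufficiently small $\Lambda$ follows from the uniform-in-$\mu$ control built into the definition of the class $I$.
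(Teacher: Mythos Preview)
Your proposal is correct and follows essentially the same route as the paper: write $d_{i-1}^*$ as $d_{i-1}$ plus the effect of pushing a quantity of size $d_i=O(\|\mu-\mu_0\|)$ through the Dulac-type transition near $p_i$, and read off the exponent $r_i$ (resp.\ $1/r_i$ via $D^{-1}$) from Mourtada's form, with the tangential case handled by $T^u\circ(T^s)^{-1}$. The paper streamlines your steps (2)--(3) into the single identity $d_{i-1}^*=(b_{i-1}^*-b_{i-1}^u)+d_{i-1}$ together with a transition map $F$ normalized so that $F(0,\mu)=0$ (based at $B_i^s$ on the incoming section and at $B_{i-1}^u$ on the outgoing one), which makes the input to $F$ literally equal to $d_i$ and the output equal to $b_{i-1}^*-b_{i-1}^u$.
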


\begin{proof} For simplicity let us assume $i=n$ and $r_n>1$. It follows from the definition of $d_{n-1}^*$ and $d_{n-1}$ that,
\begin{equation}\label{13}
	d_{n-1}^*=(b_{n-1}^*-b_{n-1}^u)+d_{n-1}.
\end{equation}  
Let $B=B_n^s+\lambda v_n\in l_n$, $\lambda<0$, with $|\lambda|$ small enough and observe that the orbit through $B$ will intersect $l_{n-1}$ in a point $C$ which can be written as,
	\[C=B_{n-1}^u+F(\lambda,\mu)v_{n-1}.\]
Therefore, we have a function $F:l_n\to l_{n-1}$ with $F(\lambda,\mu)<0$ for $\lambda<0$, $|\lambda|$ small enough, such that $F(\lambda,\mu)\to0$ as $\lambda\to0$. From \eqref{12} we have,
\begin{equation}\label{14}
	\begin{array}{l}
		B_n^u=(C_n+b_n^sv_n)+(b_n^u-b_n^s)v_n=B_n^s+d_nv_n \vspace{0.2cm} \\
		B_{n-1}^*=(C_{n-1}+b_{n-1}^uv_{n-1})+(b_n^*-b_{n-1}^u)v_{n-1}=B_{n-1}^u+(b_n^*-b_{n-1}^u)v_{n-1}.
	\end{array}
\end{equation}
Since $B_{n-1}^*$ is the intersection of the positive orbit through $B_n^u$ with $l_{n-1}$ it follows from \eqref{14} that,
	\[b_n^*-b_{n-1}^u=F(d_n,\mu).\]
Therefore, it follows from \eqref{13} that,
\begin{equation}\label{15}
	d_{n-1}^*=F(d_n,\mu)+d_{n-1}.
\end{equation}
If $p_i$ is a hyperbolic saddle, then $F$ is, up to the composition of some diffeomorphisms given by the flow of the components of $Z$, the Dulac map $D_i$ defined at Section~\ref{sec2.1}. If $p_i$ is a tangential singularity (we remember that we are under the hypothesis of Remark~\ref{StrongHypothesis}), then $F$ is, up to the composition of some diffeomorphisms given by the flow of the components of $Z$, the composition $T_i^u\circ(T_i^s)^{-1}$ defined at Section~\ref{sec2.3}. In either case, it follows from Section~\ref{sec2} that, 
\begin{equation}\label{16}
	|F(\lambda,\mu)|=|\lambda|^{r_n}(A(\mu)+O(1)),
\end{equation}
with $A(\mu_0)\neq0$. Since $d_n=O(||\mu||)$, it follows from \eqref{16} that,
	\[F(d_n,\mu)=O(||\mu||^{r_n}),\]
and thus from \eqref{15} we have the result. The case $r_n<1$ follows similarly from the fact that the inverse $F^{-1}$ has order $r_n^{-1}$ in $u$. \end{proof}

\begin{corollary}
	For each $i\in\{1,\dots,n\}$ the further displacement map $d_i^*$ is continuous differentiable with the $j$-partial derivative given either by the $j$-partial derivative of $d_i$ or $d_{i+1}$. Furthermore a connection between $p_i$ and $p_{i-2}$ exists if and only if $d_{i-2}^*(\mu)=0$ and $d_{i-1}(\mu)\neq0$.  
\end{corollary}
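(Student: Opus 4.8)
The plan is to read both statements off the \emph{proof} of Proposition~\ref{Prop3}, not merely its statement. Recall that there, from \eqref{15} and the symmetric computation for $r_n<1$, one obtains (after relabelling $i-1\mapsto i$) the exact identities $d_i^*(\mu)=d_i(\mu)+F(d_{i+1}(\mu),\mu)$ if $r_{i+1}>1$ and $d_i^*(\mu)=d_{i+1}(\mu)+G(d_i(\mu),\mu)$ if $r_{i+1}<1$, where $F$ is, up to composition with the $C^\infty$ regular transitions, either the Dulac map of Section~\ref{sec2.1} or the map $T^u\circ(T^s)^{-1}$ of Section~\ref{sec2.3}, $G$ is the corresponding inverse (up to a harmless sign), and by \eqref{16} one has $-F(\lambda,\mu)=|\lambda|^{r_{i+1}}(A(\mu)+O(1))$ with $A(\mu_0)\neq0$, while $G$ has order $1/r_{i+1}$ in its first variable.

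For the continuous differentiability and the form of the partial derivative, the key is that when $r_{i+1}>1$ the map $F$, extended by $F(\cdot,\mu)\equiv0$ on $\{\lambda=0\}$, is of class $C^1$ in $(\lambda,\mu)$ up to $\{\lambda=0\}$ and both $F$ and $\partial_\lambda F$ vanish identically there: by part~(b) of the Proposition in Section~\ref{sec2.1}, $\partial_\lambda F(\lambda,\mu)=r_{i+1}|\lambda|^{r_{i+1}-1}(A(\mu)+\xi(\lambda,\mu))$ with $\xi\in I$, so $r_{i+1}-1>0$ makes it tend to $0$ as $\lambda\to0$ uniformly in $\mu$, while the $\mu$-derivatives are handled by the same estimates of Section~\ref{sec2} (the logarithmic terms from $\partial_\mu|\lambda|^{r(\mu)}=|\lambda|^{r(\mu)}\ln|\lambda|\,\partial_\mu r(\mu)$ also vanish as $\lambda\to0$ because $r(\mu)>0$). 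When $r_{i+1}<1$ one argues identically with $G=F^{-1}$, which by part~(a) has order $1/r_{i+1}>1$. Since $d_{i+1}$ (resp. $d_i$) is $C^\infty$ in $\mu$ and vanishes at $\mu_0$, the chain rule then gives that $\mu\mapsto F(d_{i+1}(\mu),\mu)$ (resp. $\mu\mapsto G(d_i(\mu),\mu)$) is $C^1$ near $\mu_0$ with vanishing gradient at $\mu_0$; hence $d_i^*$ is continuously differentiable and $\partial_{\mu_j}d_i^*(\mu_0)$ equals $\partial_{\mu_j}d_i(\mu_0)$ when $r_{i+1}>1$ and $\partial_{\mu_j}d_{i+1}(\mu_0)$ when $r_{i+1}<1$.

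For the equivalence, I would unwind the construction of Section~\ref{sec5}. In the regime $r_{i-1}>1$ (the case $r_{i-1}<1$ is the same, the topological meaning of $d_{i-2}^*$ being unchanged by the Remark after its definition), $d_{i-2}^*(\mu)=0$ says exactly that $B_{i-2}^*=L_{i-1}^u(\mu)\cap l_{i-2}$ coincides with $B_{i-2}^s=L_{i-2}^s(\mu)\cap l_{i-2}$; by uniqueness of solutions this forces $L_{i-1}^u(\mu)$ and $L_{i-2}^s(\mu)$ to lie on a single orbit, which then has $\alpha$-limit $p_i$ and $\omega$-limit $p_{i-2}$, i.e.\ a heteroclinic connection between $p_i$ and $p_{i-2}$. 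For $B_{i-2}^*$ to be defined at all, $L_{i-1}^u(\mu)$ must leave a neighbourhood of $p_i$, pass by $p_{i-1}$ through the local transition map, and reach $l_{i-2}$, which happens precisely when $d_{i-1}(\mu)\neq0$ (if $d_{i-1}(\mu)=0$ then $L_{i-1}^u(\mu)=L_{i-1}(\mu)$ has $\omega$-limit $p_{i-1}$ and cannot continue); this is already implicit in the necessary condition recorded in the Remark of Section~\ref{sec5}. Conversely, a connection contained in a small neighbourhood of $L_{i-1}\cup\{p_{i-1}\}\cup L_{i-2}$ cannot pass through $p_{i-1}$, so $d_{i-1}(\mu)\neq0$, and near $p_i$ (resp.\ $p_{i-2}$) it coincides with the separatrix $L_{i-1}^u(\mu)$ (resp.\ $L_{i-2}^s(\mu)$); hence it meets $l_{i-2}$ at a point equal to both $B_{i-2}^*$ and $B_{i-2}^s$, so $d_{i-2}^*(\mu)=0$.

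The step I expect to be the main obstacle is the joint $C^1$-regularity of $F$ (and of $G$ when $r_{i+1}<1$) up to $\{\lambda=0\}$: this is not immediate from the bare form of the Dulac map and requires combining the $I$-class estimates of Section~\ref{sec2.1}, the $C^\infty$ dependence on $\mu$, and the control of the logarithmic terms above. Once this is granted, everything else is a matter of the chain rule and of unravelling the definitions in Section~\ref{sec5}.
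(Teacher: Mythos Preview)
Your argument is correct and in fact more careful than what the paper provides: the Corollary is stated in the paper without proof, as an immediate consequence of Proposition~\ref{Prop3}. Strictly speaking, the pointwise estimate $d_i^*(\mu)=d_i(\mu)+O(\|\mu-\mu_0\|^{r_{i+1}})$ in the \emph{statement} of Proposition~\ref{Prop3} does not by itself yield $C^1$-regularity of the remainder, so your choice to go back to the exact identity \eqref{15} from its \emph{proof} and to invoke the $I$-class estimates and part~(b) of the Proposition in Section~\ref{sec2.1} for $\partial_\lambda F$ (together with the $|\lambda|^{r(\mu)}\ln|\lambda|$ control for the $\mu$-derivatives) is the right way to make the claim rigorous. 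The second assertion is, as you say, just the definition of $d_{i-2}^*$ unwound; your argument there is fine, with the one cosmetic remark that in the forward direction the condition is really $d_{i-1}(\mu)<0$ (resp.\ $>0$) depending on whether $r_{i-1}>1$ or $r_{i-1}<1$, which of course implies the $d_{i-1}(\mu)\neq0$ appearing in the statement and is already recorded in the Remark you cite.
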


\section{Proof of Theorem~\ref{Main4}}\label{sec6}

\noindent {\it Proof of Theorem~\ref{Main4}.} Let $Z=(X_1,\dots,X_M;\Sigma)$ and denote $X_i=(P_i,Q_i)$, $i\in\{1,\dots,M\}$. Let $\{p_1,\dots,p_n\}$ be the singularities of $\Gamma^n$ and $L_i$ the regular orbits between them such that $\omega(L_i)=p_i$ and $\alpha(L_i)=p_{i+1}$. If $L_i\cap\Sigma=\emptyset$, then take $x_{i,0}\in L_i$ and $\gamma_i(t)$ a parametrization of $L_i$ such that $\gamma_i(0)=x_{i,0}$. If $L_i\cap\Sigma\neq\emptyset$, then let $L_i\cap\Sigma=\{x_{i,0},\dots,x_{i,n(i)}\}$ and take $\gamma_i(t)$ a parametrization of $L_i$ such that $\gamma_i(t_{i,j})=x_{i,j}$ with $0=t_{i,0}>t_{i,1}>\dots>t_{i,n(i)}$. In either case denote $L_i^+=\{\gamma_i(t):t>0\}$ if $p_i$ is a hyperbolic saddle or $L_i^+=\{\gamma_i(t):0<t<t_i\}$, where $t_i$ is such that $\gamma_i(t_i)=p_i$, if $p_i$ is a $\Sigma$-singularity. Following \cite{HanWuBi}, for each $i\in\{1,\dots,n\}$ let $G_{i,j}$, $j\in\{1,2\}$, be two compact disks small enough such that,
	\begin{enumerate}[label=\arabic*)]
		\item $\Gamma^n\cap G_{i,j}=L_i^+\cap G_{i,j}\neq\emptyset$, $j\in\{1,2\}$;
		\item $G_{i,1}\subset \text{Int}G_{i,2}$;
		\item $G_{i,2}\cap G_{s,2}=\emptyset$ for any $i\neq s$;
		\item $G_{i,j}\cap\Sigma=\emptyset$.
	\end{enumerate}
Let $k_i:\mathbb{R}^2\to[0,1]$ be a $C^\infty$-bump function such that,
	\[k_i(x)=\left\{\begin{array}{l} 0, \quad x\notin G_{i,2}, \\ 1, \quad x\in G_{i,1}. \end{array}\right.\]	
See Figure~\ref{FigExtra2}.
\begin{figure}[ht]
	\begin{center}
		\begin{overpic}[height=7cm]{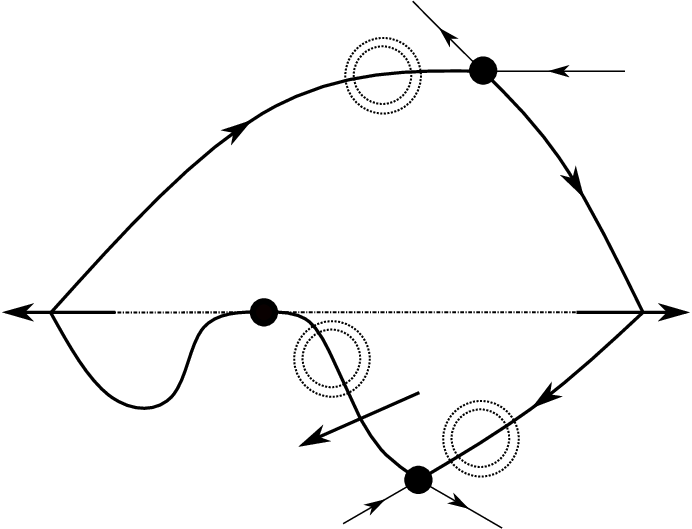}
			\put(71,70){$p_1$}
			\put(35,27){$p_2$}
			\put(59,2){$p_3$}
			\put(75,32.5){$\Sigma$}
			\put(54,56){$G_{1,j}$}
			\put(54,25){$G_{2,j}$}
			\put(75,8){$G_{3,j}$}
			\put(1,26){$l_1$}
			\put(45,7){$l_2$}
			\put(97,26){$l_3$}
		\end{overpic}
	\end{center}
	\caption{Illustration of the sets $G_{i,j}$.}\label{FigExtra2}
\end{figure}
Let $\mu\in\mathbb{R}^n$ and $g_i:\mathbb{R}^2\to\mathbb{R}^2$, $i\in\{1,\dots,n\}$, be maps that we yet have to define. Let also, 
	\[g(x,\mu)=\sum_{i=1}^{n}\mu_ik_i(x)g_i(x),\]
and for now one let us denote $X_i=X_i+g$. Let $\Lambda$ be a small enough neighborhood of the origin of $\mathbb{R}^n$. It follows from Section~\ref{sec4} that each displacement map $d_i:\Lambda\to\mathbb{R}$ controls the bifurcations of $L_i$ near $x_{i,0}$. It follows from Definition~\ref{DefDisplacement1} that,
	\[d_i(\mu)=\omega_0[x_{i,0}^u(\mu)-x_{i,0}^s(\mu)]\land \eta_i,\]
where $\eta_i$ is the analogous of $u_0$ in Figure~\ref{Fig3}. But from the definition of $g$ we have that each $x_{i,0}^u(\mu)$ does not depend on $\mu$ and thus $x_{i,0}^u\equiv x_{i,0}$. Furthermore it follows from the definition of $k_i$ that each singularity $p_i$ of $\Gamma^n$ also does not depend on $\mu$ and thus $\frac{\partial \gamma_i}{\partial \mu_j}(t_i)=0$ for every tangential singularity $p_i$. Therefore, it follows from Propositions~\ref{MainProp1} and \ref{MainProp2} that,
	\[\frac{\partial d_i}{\partial\mu_j}(0)=-\frac{\omega_0}{||X_i(x_0,\mu_0)||}\int_{L_i^+}e^{D_i(t)}\left[(M_iX_i)\land\frac{\partial X_i}{\partial \mu_j}(\gamma_i(t),0)-\sin\theta_iR_{i,j}(\gamma_i(t),0)\right]dt,\]
with,
	\[D_i(t)= -\int_{0}^{t}\textnormal{div}X_i(\gamma_0(s),\mu_0)ds,\]
and,
	\[R_{i,j} = \dfrac{\partial P_i}{\partial \mu_j}\left[\left(\dfrac{\partial Q_i}{\partial x_1}+\dfrac{\partial P_i}{\partial x_2}\right)Q_i+\left(\dfrac{\partial P_i}{\partial x_1}-\dfrac{\partial Q_i}{\partial x_2}\right)P_i\right] + \dfrac{\partial Q_i}{\partial \mu_j}\left[\left(\dfrac{\partial P_i}{\partial x_2}+\dfrac{\partial Q_i}{\partial x_1}\right)P_i+\left(\dfrac{\partial Q_i}{\partial x_2}-\dfrac{\partial P_i}{\partial x_1}\right)Q_i\right],\]
$i$, $j\in\{1,\dots,n\}$. We observe that if $L_i\cap\Sigma=\emptyset$, then $\theta_i=0$. It follows from the definition of the sets $G_{i,j}$ that $\frac{\partial d_i}{\partial \mu_j}(0)=0$ if $i\neq j$. Let $M_iX_i=(\overline{P}_i,\overline{Q}_i)$ and $R_{i,i}=\frac{\partial P_i}{\partial \mu_i}F_{i,1}+\frac{\partial Q_i}{\partial \mu_i}F_{i,2}$, where,
	\[F_{i,1}=\left(\dfrac{\partial Q_i}{\partial x_1}+\dfrac{\partial P_i}{\partial x_2}\right)Q_i+\left(\dfrac{\partial P_i}{\partial x_1}-\dfrac{\partial Q_i}{\partial x_2}\right)P_i, \quad	F_{i,2}=\left(\dfrac{\partial P_i}{\partial x_2}+\dfrac{\partial Q_i}{\partial x_1}\right)P_i+\left(\dfrac{\partial Q_i}{\partial x_2}-\dfrac{\partial P_i}{\partial x_1}\right)Q_i.\]
Let $g_i=(g_{i,1},g_{i,2})$ and observe that,
	\[(M_iX_i)\land\dfrac{\partial X_i}{\partial \mu_i}-\sin\theta_iR_{i,i} = k_i[g_{i,2}(\overline{P}_i-\sin\theta_iF_{i,2})-g_{i,1}(\overline{Q}_i+\sin\theta_iF_{i,1})].\]
Therefore, if we take $g_i=-\omega_0(-\overline{Q}_i-\sin\theta_iF_{i,1},\;\overline{P}_i-\sin\theta_iF_{i,2})$, then we can conclude that, 
\begin{equation}\label{18}
	d_i(\mu)=a_i\mu_i+O(||\mu||^2),
\end{equation}
with $a_i=\frac{\partial d_i}{\partial \mu_i}(0)>0$, $i\in\{1,\dots,n\}$. If $n=1$, then it follows from Proposition~\ref{Prop5} that any $\mu\in\mathbb{R}$ arbitrarily small such that $(R_1-1)\sigma_0\mu<0$ result in the bifurcation of at least one limit cycle. Suppose $n\geqslant2$ and that the result had been proved in the case $n-1$. We will now prove by induction in $n$. For definiteness we can assume $R_n>1$ and therefore $R_{n-1}<1$ and thus $r_n>1$. Moreover, it follows from Theorem~\ref{Main1} that $\Gamma^n$ is stable. Define,
	\[D=(d_1,\dots,d_{n-2},d_{n-1}^*).\]
It follows from Proposition~\ref{Prop3} and from \eqref{18} that we can apply the Implicit Function Theorem on $D$ and thus obtain unique $C^\infty$-functions $\mu_i=\mu_i(\mu_n)$, $\mu_i(0)=0$, $i\in\{1,\dots,n-1\}$, such that,
	\[D(\mu_1(\mu_n),\dots,\mu_{n-1}(\mu_n),\mu_n)=0,\]
for $|\mu_n|$ small enough. It also follows from \eqref{18} that $d_n\neq0$ if $\mu_n\neq0$, with $|\mu_n|$ small enough. Therefore, if $\mu_i=\mu_i(\mu_n)$ and $\mu_n\neq0$, then it follows from the definition of $D=0$ that there exist a $\Gamma^{n-1}=\Gamma^{n-1}(\mu_n)$ polycycle formed by $n-1$ singularities and $n-1$ regular orbits $L_i^*=L_i^*(\mu_n)$ such that,
\begin{enumerate}[label=\arabic*)]
	\item $\Gamma^{n-1}\to\Gamma^n$,
	\item $L_{n-1}^*\to L_n\cup L_{n-1}$ and,
	\item $L_i^*\to L_i$, $i\in\{1,\dots,n-2\}$,
\end{enumerate}
as $\mu_n\to0$. See Figure~\ref{FigExtra3}. 
\begin{figure}[ht]
	\begin{center}
		\begin{minipage}{6cm}
			\begin{center}
				\begin{overpic}[width=5.5cm]{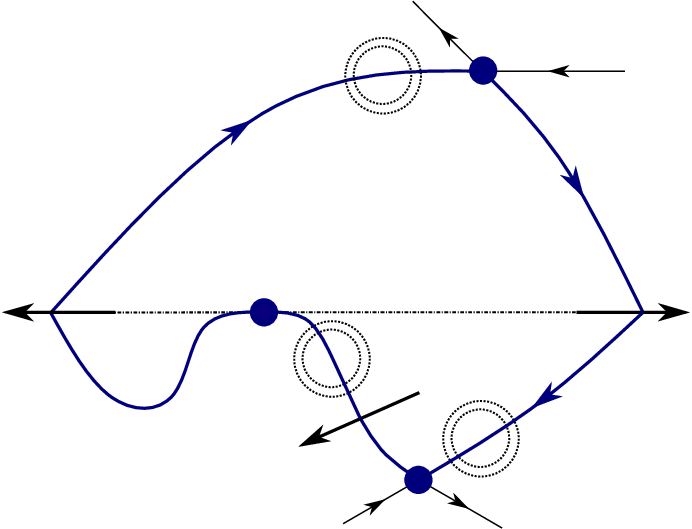} 
					\put(71,70){$p_1$}
					\put(35,25){$p_2$}
					\put(58,1){$p_3$}
					\put(75,32.5){$\Sigma$}
					\put(1,24){$l_1$}
					\put(46,6){$l_2$}
					\put(97,24){$l_3$}
				\end{overpic}
				
				Before the perturbation.
			\end{center}
		\end{minipage}
		\begin{minipage}{6cm}
			\begin{center}
				\begin{overpic}[width=5.5cm]{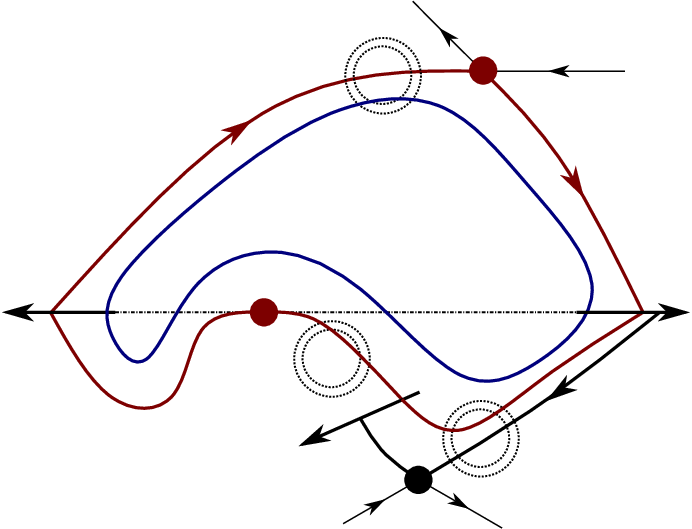}
					\put(71,70){$p_1$}
					\put(35,25){$p_2$}
					\put(58,1){$p_3$}
					\put(75,32.5){$\Sigma$}
					\put(1,24){$l_1$}
					\put(46,6){$l_2$}
					\put(97,24){$l_3$}
				\end{overpic}
				
				After the perturbation.
			\end{center}
		\end{minipage}
	\end{center}
	\caption{Illustration of the induction process with $R_3>1$ and $R_2<1$. Blue (resp. red) means a stable (resp. unstable) polycycle or limite cycle.}\label{FigExtra3}
\end{figure}
Let, 
	\[\left.R_j^*=\prod_{i=1}^{j}r_i\right|_{\mu_i=\mu_i(\mu_n),\;i\in\{1,\dots,n-1\}},\]
$j\in\{1,\dots,n-1\}$. Then it follows from the hypothesis, 
	\[(R_i-1)(R_{i+1}-1)<0,\]
for $i\in\{1,\dots,n-1\}$ and from the hypothesis $R_{n-1}<1$, that, 
	\[(R_i^*-1)(R_{i+1}^*-1)<0,\]
for $i\in\{1,\dots,n-2\}$ and $R_{n-1}^*<1$ for $\mu_n\neq0$ small enough. Thus, it follows from Theorem~\ref{Main1} that $\Gamma^{n-1}$ is unstable while $\Gamma^n$ is stable. It then follows from the Poincar\'e-Bendixson theory, and from the fact that the first return map is non-flat, that at least one stable limit cycle $\overline{\gamma}_n(\mu_n)$ exists near $\Gamma^{n-1}$. In fact both the limit cycle and $\Gamma^{n-1}$ bifurcates from $\Gamma^n$. Now fix $\mu_n\neq0$, $|\mu_n|$ arbitrarily small, and define the non-smooth system,
	\[Z_0^*=Z^*(x)+g^*(x,\overline{\mu}),\]
where $Z^*(x)=Z(x)+g(x,\mu_1(\mu_n),\dots,\mu_{n-1}(\mu_n),\mu_n)$ and, 
	\[g^*(x,\overline{\mu})=\sum_{i=1}^{n-1}\overline{\mu}_ik_i(x)g_i(x),\]
with $\overline{\mu}_i=\mu_i-\mu_i(\mu_n)$. It then follows by the definitions of $G_{i,j}$ and $L_i^*$ that,
	\[\Gamma^{n-1}\cap G_{i,j}=(L_i^*)^+\cap G_{i,j}\neq\emptyset,\]
$i\in\{1,\dots,n-1\}$ and $j\in\{1,2\}$. In this new parameter coordinate system the bump functions $k_i$ still ensures that $\frac{\partial d_i}{\partial \mu_j}(\mu)=0$ if $i\neq j$. Since $a_i>0$, it also follows that at the origin of this new coordinate system we still have $\frac{\partial d_i}{\partial \mu_i}(0)>0$. Therefore, it follows by induction that at least $n-1$ crossing limit cycles $\overline{\gamma}_j(\overline{\mu})$, $j\in\{1,\dots,n-1\}$, bifurcates near $\Gamma^{n-1}$ for arbitrarily small $|\overline{\mu}|$. Furthermore, we observe that $\overline{\gamma}_n(\mu_n)$ persists for $\overline{\mu}$ small enough, because it has odd multiplicity. {\hfill$\square$}

\section*{Acknowledgments}

We thank to the reviewers their comments and suggestions which help us to improve the presentation of this paper. The author is supported by S\~ao Paulo Research Foundation (FAPESP), grants 2019/10269-3 and 2021/01799-9.

\end{document}